\def \F {{\mathbb F}}
\def \V {{\mathbb V}}
\newtheorem{theorem}{Theorem}
\newtheorem{thm}[theorem]{Theorem}
\newtheorem{prop}[theorem]{Proposition}
\newtheorem{remark}[theorem]{Remark}
\newtheorem{corollary}[theorem]{Corollary}
\newtheorem{conj}[theorem]{Conjecture}
\def\xx{{\bf x}}
\def\00{{\bf 0}}
\def\11{{\bf 1}}
\def\+{\oplus}
\def \F {{\mathbb F}}
\def \V {{\mathbb V}}
\def\wt{{\rm wt}}
\begin{document}

\title{\huge\bf
\textrm{Bisecting binomial coefficients} }

\author{\Large Eugen J. Iona\c scu$^1$, Thor Martinsen$^2$, Pantelimon St\u anic\u a$^2$\\
\vspace{0.1cm} \\
\small $^1$Department of Mathematics, \\
\small Columbus State University\\
\small Columbus, GA 31907\\
\small Email: {\tt \{math\}@ejionascu.ro}\\
\small    \\
\small $^2$Department of Applied Mathematics, \\
\small Naval Postgraduate School\\
\small  Monterey, CA 93943-5212, U.S.A.\\
\small Email: {\tt \{tmartins,pstanica\}@nps.edu}
}

\date{\today}
\maketitle
\thispagestyle{empty}

\begin{abstract}
In this paper, we deal with the problem of bisecting binomial coefficients. We find many (previously unknown) infinite classes of integers which admit nontrivial bisections, and a class with only trivial bisections. As a byproduct of this last construction, we show conjectures $Q2$ and $Q4$ of Cusick and Li~\textup{\cite{CL05}}. We next find several bounds for the number of nontrivial bisections and further compute (using a supercomputer) the exact number of such bisections  for~$n\leq 51$.
\end{abstract}
{\bf Keywords:} Binomial coefficients, subset sum problem, diophantine equations.

\section{Introduction}

  In the pursuit of constructing symmetric Boolean functions with various cryptographic properties (resilience, avalanche features), Mitchell~\cite{Mi90}, Gopalakrishnan et al.~\cite{GHS93}, von zur Gathen and Roche~\cite{GR97}, as well as Cusick and Li~\cite{CL05}, among others, study a seemingly ``innocent'' problem, namely the binomial coefficients bisection (BCB), which we shall describe below.

 The connection between symmetric Boolean functions and binomial coefficients is rather immediate. Let $\V_n$ be an $n$-dimensional vector space over the two-element field $\F_2$. A Boolean function $f:\V_n\to\F_2$ is symmetric if  its output value $f(\xx)$ only depends upon the (Hamming) weight  of its input, $\wt(\xx)$ (number of nonzero bits of $\xx$). Since there are $\binom{n}{w}$ vectors $\xx$ of weight $\wt(\xx)=w$, then $f$ is constant on each such set of vectors. Thus, $f$ can be ``compressed'' into an $n+1$ vector of values corresponding to each partition class of cardinality $\binom{n}{w}$, $0\leq w\leq n$. Now, if one further imposes balancedness on $f$ (in addition to symmetry), that is its weight is $\wt(f)=2^{n-1}$, then it follows that one  also has to have a two set partition  $I,J$, of these binomial coefficients $\binom{n}{w}$ so that the function $f$ has value $b\in\{0,1\}$ on the vectors of weight in $I$ and value $\bar b$ on vectors in $J$. Thus, we are prompted in studying these splitting (bisections) of binomial coefficients, and that is the subject of this paper.

If $\displaystyle \sum_{i=0}^n \delta_i\binom{n}{i}=0$, $\delta_i\in\{-1,1\}$, then we call $[\delta_0,\ldots,\delta_n]$ a solution of the (BCB) problem. So, the (BCB) problem consists in finding all these solutions
(the set of all solutions will be denoted by ${\cal J}_n$) and in particular the number of all such solutions, which we will be denoting by $J_n$.
 Certainly, for such a solution, letting $I=\{i\,|\, \delta_i=1\}$ and  $J=\{i\,|\, \delta_i=-1\}:=\overline{I}$, we obtain a {\em bisection} $\displaystyle \sum_{i\in I}\binom{n}{i}=\sum_{i\in J}\binom{n}{i}=2^{n-1}$. Conversely, having a bisection we can reconstruct the solution of (BCB), that it came from, in the previous construction. So, in what follows we are going to use either one of the these descriptions of a solution of the (BCB) problem.

By the binomial theorem  $\sum_{i} (-1)^n\binom{n}{i}=(1-1)^n=0$, so $\pm [1,-1,1,-1,\ldots ]$ is always a solution of (BCB), i.e., we have at least two solutions for every~$n$~($J_n\ge 2$).
We also observe (see also~\cite{CL05})  that if  $n$ is odd then $$[\delta_0,\ldots,\delta_{(n-1)/2},-\delta_{(n-1)/2},\ldots,-\delta_0]$$ with $\delta_i\in \{-1,1\}$ arbitrary chosen, give $2^{(n+1)/2}$ solutions (that include the ones we mentioned before, so $J_{2n-1}\ge 2^n$). These are all called {\em trivial} solutions~\cite{CL05}.

There are sporadic situations when nontrivial solutions do appear. For instance, when $n\equiv 2\pmod 6$, because of the identity $$\binom{n}{k}=2\binom{n}{k-1}=\binom{n}{k-1}+\binom{n}{n-k+1},$$
 \noindent where $k=\frac{n+1}{3}$ being odd, nontrivial solutions appear by moving the above terms from the equality  $$\sum_{i\ odd} \binom{n}{i}=\sum_{i\ even} \binom{n}{i},$$
 \noindent from one side to the other. For example, if $n=8$ we have
$$1+\underline{28}+70+\underline{28}+1=8+\underline{56}+56+8\Rightarrow 1+\underline{56}+70+1=8+\underline{28}+\underline{28}+56+8.$$

\noindent This implies that $[1,-1,-1,1,1,-1,-1,-1,1]$ is a solution for (BCB) problem.
\noindent Besides these type of examples, all that is known about the bisection of binomial coefficients, are mostly computational results (see ~\cite{Mi90,GHS93,GR97,CL05}).

\section{A general approach and  an upper bound }

The well-known  formula from trigonometry $$\cos \alpha\cos \beta=\frac{1}{2}[\cos(\alpha+\beta)+\cos(\alpha-\beta)],\ \alpha,\beta\ \in \mathbb R,$$
can be generalized easily (by induction on the number of angles) in the following way.  For $x_1$, $x_2$, ..., $x_m$ arbitrary real numbers, we have
$$\cos x_1\cos x_2\cdots\cos x_m=\frac{1}{2^{m-1}}\sum \cos ( x_1\pm x_2\pm \cdots \pm x_m),$$
\noindent where the sum is over all possible choices of signs $+$
and $-$. This shows that the number of solutions (all possible choices of signs) of the equation $x_1\pm x_2\pm \cdots \pm x_m=0$ (where $x_i$'s are positive integers) is given by the
formula

$$\frac{2^{m-1}}{2\pi} \int_{-\pi}^{\pi} \cos (x_1t)\cos (x_2t)\cdots\cos (x_mt) dt,$$

\noindent or, since the integrant is an even function,

$$\frac{2^{m-1}}{\pi} \int_{0}^{\pi} \cos (x_1t)\cos (x_2t)\cdots\cos (x_mt) dt.$$

Changing the variable, $t=\pi s$, we can apply this to the bisection of binomial coefficients, and immediately infer the next formula for $J_n$.

\begin{thm}
\label{thm-bisol}
The number of binomial coefficients bisections for fixed $n$ can be computed with the following formula
\begin{equation}\label{eq1}
J_n=2^{n+1}  \int_0^1 \prod_{j=0}^n \cos\left(\pi   \binom{n}{j} s\right)d\,s.
\end{equation}
\end{thm}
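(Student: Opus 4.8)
The plan is to specialize the integral formula derived just above the statement, taking $m=n+1$ and $x_j=\binom{n}{j}$ for $j=0,1,\ldots,n$. These are positive integers, so the hypotheses are met, and a solution of the (BCB) problem is exactly a choice of signs making $\binom{n}{0}\pm\binom{n}{1}\pm\cdots\pm\binom{n}{n}=0$. Applying the product-to-sum expansion to $\prod_{j=0}^n\cos\!\left(\binom{n}{j}t\right)$ and integrating term by term over $[0,\pi]$, I would invoke the orthogonality relation $\int_0^\pi\cos(St)\,dt=\pi$ when the integer $S=0$ and $=0$ otherwise; this kills every summand except those whose signed combination of binomial coefficients vanishes, so the integral literally counts such sign patterns.

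The one point that needs care, and which I expect to be the main (though minor) obstacle in matching the constant in \eqref{eq1}, is a factor of two. In the product-to-sum identity the first term $\binom{n}{0}$ keeps a fixed $+$ sign, so the integral counts only those solutions $[\delta_0,\ldots,\delta_n]$ with $\delta_0=1$; calling this count $N$, one gets $N=\frac{2^{n}}{\pi}\int_0^\pi\prod_{j=0}^n\cos\!\left(\binom{n}{j}t\right)dt$. However, $J_n$ counts all solutions with the $\delta_i\in\{-1,1\}$ unrestricted. The global negation $[\delta_0,\ldots,\delta_n]\mapsto[-\delta_0,\ldots,-\delta_n]$ is a fixed-point-free involution on $\cJ_n$ (fixed-point-free since no $\delta_i$ is zero) that carries each $\delta_0=1$ solution to a distinct $\delta_0=-1$ solution, so $J_n=2N$. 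This supplies the extra factor and yields $J_n=\frac{2^{n+1}}{\pi}\int_0^\pi\prod_{j=0}^n\cos\!\left(\binom{n}{j}t\right)dt$; the discrepancy between this $2^{n+1}$ and the naive $2^{m-1}=2^n$ from the formula is precisely accounted for by this symmetry.

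Finally I would perform the substitution $t=\pi s$, $dt=\pi\,ds$, which sends $[0,\pi]$ to $[0,1]$. The factor $\pi$ coming from $dt$ cancels the $1/\pi$ in front, while each $\cos\!\left(\binom{n}{j}t\right)$ becomes $\cos\!\left(\pi\binom{n}{j}s\right)$, producing exactly the claimed identity \eqref{eq1}. The only genuinely quantitative ingredient is the orthogonality computation, which is routine; the conceptual heart of the argument is the bookkeeping in the preceding paragraph, ensuring that the integral counts sign vectors with the correct multiplicity.
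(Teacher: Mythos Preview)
Your proposal is correct and follows essentially the same route as the paper: specialize the product-to-sum identity to $x_j=\binom{n}{j}$, use orthogonality of $\cos(St)$ over $[0,\pi]$ for integer $S$, and change variables $t=\pi s$. Your explicit handling of the factor of two via the fixed-point-free negation involution on $\cJ_n$ is in fact more carefully spelled out than in the paper, which simply passes from the count with $x_1$ fixed positive (coefficient $2^{m-1}$) to the count $ES(x_1,\ldots,x_m)$ with all signs free (coefficient $2^m$) without comment; but the argument is the same.
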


We certainly could have used  the below result of Freiman~\cite{Fr80} (see also~\cite{AI14,B82,CF89,Dr88}; seemingly, Drimbe~\cite{Dr88} was unaware of Freiman's work), but we preferred our elementary approach. We mention it here, though, since we will need it later in the paper.
\begin{thm}
\label{freiman}
Let $A=\{a_1,a_2,\ldots,a_N\}$ and $b\leq \frac{1}{2}\sum_{i=1}^N a_i$. The number of Boolean solutions for the equation
\[
\sum_{i=1}^N a_i x_i=b,\  x_i\in\{0,1\}
\]
is precisely $\displaystyle \int_0^1 e^{-2\pi i x b}\prod_{j=1}^N \left(1+e^{2\pi i x a_j}\right)d\,x$.
\end{thm}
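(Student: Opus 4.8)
The plan is to prove the formula by a direct expansion of the generating product followed by character orthogonality on the unit interval. To avoid clashing with the Boolean unknowns $x_i$ in the statement, I will write the Boolean vector as $\epsilon=(\epsilon_1,\ldots,\epsilon_N)\in\{0,1\}^N$ and keep $x$ for the integration variable. First I would expand each factor as $1+e^{2\pi i x a_j}=\sum_{\epsilon_j\in\{0,1\}} e^{2\pi i x a_j \epsilon_j}$, so that distributing the product gives $\prod_{j=1}^N\left(1+e^{2\pi i x a_j}\right)=\sum_{\epsilon\in\{0,1\}^N} e^{2\pi i x \sum_{j=1}^N a_j \epsilon_j}$, where the outer sum runs over all $2^N$ Boolean vectors. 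Multiplying by $e^{-2\pi i x b}$ turns the integrand into $\sum_{\epsilon\in\{0,1\}^N} e^{2\pi i x\left(\sum_{j=1}^N a_j \epsilon_j - b\right)}$.

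The second step is to integrate term by term. Since the sum is finite, interchanging $\sum$ and $\int_0^1$ is unproblematic, and the task reduces to evaluating $\int_0^1 e^{2\pi i x m}\,dx$ for each integer $m=\sum_{j=1}^N a_j \epsilon_j - b$. Here I would invoke the integrality hypothesis on $A$ and $b$: because every $a_j$ and $b$ is an integer, each exponent $m$ is an integer, so the orthogonality relation $\int_0^1 e^{2\pi i m x}\,dx=1$ when $m=0$ and $0$ otherwise applies. Hence each Boolean vector $\epsilon$ contributes exactly $1$ to the integral when $\sum_{j=1}^N a_j \epsilon_j = b$ and $0$ otherwise.

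Summing these contributions, the integral counts precisely the $\epsilon\in\{0,1\}^N$ solving $\sum_{j=1}^N a_j \epsilon_j = b$, which is the asserted number of Boolean solutions. The constraint $b\leq \frac{1}{2}\sum_{i=1}^N a_i$ plays no role in the identity itself; it only fixes attention on the smaller of the two halves, since flipping every bit sends a solution for $b$ to a solution for $\sum_{i=1}^N a_i - b$, giving a bijection symmetric about the midpoint. I would therefore mention this as a closing remark rather than use it in the derivation.

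There is no genuine obstacle in this argument: the only point requiring care is the integrality of the exponents, which is exactly where the character orthogonality collapses the integral to a counting function. Structurally this is the Fourier-analytic twin of the elementary trigonometric derivation of Theorem~\ref{thm-bisol}: replacing the cosine product by the complex-exponential product $\prod_{j=1}^N\left(1+e^{2\pi i x a_j}\right)$ and isolating the frequency $b$ through the factor $e^{-2\pi i x b}$ expresses the same orthogonality principle in additive rather than symmetric form.
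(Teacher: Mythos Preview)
The paper does not actually prove this statement: it is quoted as a known result of Freiman~\cite{Fr80} and cited without proof, precisely because the authors derived their Theorem~\ref{thm-bisol} by the cosine-product route instead. Your argument is correct and is the standard one --- expand the product over Boolean choices, then use the orthogonality $\int_0^1 e^{2\pi i m x}\,dx=[m=0]$ for integer $m$ --- and your remark that the hypothesis $b\le\tfrac12\sum a_i$ is inessential to the identity is accurate.
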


Let us denote by $ES(x_1,x_2,...,x_m)$ the number of all solutions of the equation $\pm x_1\pm x_2\pm \cdots \pm x_m=0$. As we have shown, we have

\begin{equation}\label{eq1general}
ES(x_1,x_2,...,x_m)=\frac{2^{m}}{\pi}  \int_0^{\pi} \prod_{j=1}^m \cos\left(x_j t \right)d\,t .
\end{equation}

 In \cite[p. 441]{Rog}, it is shown that for every $k\in \mathbb N$, we have the formula
 $$\int_0^{\pi/2}(\sin t)^k dt=\int_0^{\pi/2}(\cos t)^k dt
 =\begin{cases} \frac{(k-1)!!}{k!!}\frac{\pi}{2} \ \text{if $k$ is even}\\ \\
 \frac{(k-1)!!}{k!!}\ \ \text{if $k$ is odd},
 \end{cases}$$

\noindent where $k!!=k(k-2)\cdots$ (the product of all  integers  $\leq k$ having the same parity as $k$).

A generalization of the Integral H\"older Inequality can be stated in the following way:
given $f_1$, $f_2$,..., $f_m$ functions in $L^{2m}(X)$ ($X$ is a measure space) we have
$$||\prod_{j=1}^m f_j||_2\le \prod_{j=1}^m ||f_j||_{2m},$$
where $\|\cdot\|_p$ is the usual $p$-norm on the measure space $X$.

\noindent Putting these two ingredients together and using Cauchy-Schwartz Inequality,  we obtain
\allowdisplaybreaks[4]
\begin{align*}
ES(x_1,x_2,...,x_m)&\le \frac{2^{m}}{\pi}  \int_0^{\pi} \prod_{j=1}^m |\cos\left(x_j t \right)|d\,t \\
&\le
\displaystyle \frac{2^{m}}{\sqrt{\pi}}\left[\int_0^{\pi} \prod_{j=1}^m |\cos\left(x_j t \right)|^2d\,t \right]^{1/2}\\
&\le
\frac{2^{m}}{\sqrt{\pi}} \prod_{j=1}^m \left(\int_0^{\pi} |\cos\left(x_j t \right)|^{2m}\right)^{1/{(2m)}}dt.
\end{align*}

\noindent  But for $x_j\in \mathbb N$, we have
 \begin{align*}
\int_0^{\pi}  |\cos\left(x_j t \right)|^{2m}dt& =\frac{1}{x_j}\int_0^{x_j\pi} |\cos s|^{2m}ds=\int_0^{\pi} |\cos s|^{2m}ds\\
&=2\int_0^{\pi/2} |\cos s|^{2m}ds=\frac{(2m-1)!!}{(2m)!!}\pi
\end{align*}

Hence, we obtained the following result.
\begin{thm}\label{upperbounds} Given $x_1$, $x_2$, ..., $x_m$ arbitrary positive integers, the following estimates hold
\begin{equation}
\label{upperboundeq}
ES(x_1,x_2,...,x_m)\le 2^{m}\left(\frac{(2m-1)!!}{2m!! }\right)^{1/2}=\left(2\binom{2m-1}{m} \right)^{1/2}.
\end{equation}
In particular,
\begin{equation*}
J_m \le \left(2\binom{2m+1}{m+1} \right)^{1/2}.
\end{equation*}
\end{thm}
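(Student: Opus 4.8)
The plan is to pass from the exact oscillatory-integral representation of $ES$ to an upper bound by absorbing all the sign cancellation into absolute values, and then to decouple the resulting product of cosines into single-frequency integrals whose common value I can evaluate in closed form. Starting from the identity \eqref{eq1general},
$$ES(x_1,\ldots,x_m)=\frac{2^{m}}{\pi}\int_0^{\pi}\prod_{j=1}^m\cos(x_jt)\,dt,$$
I would first bound the integrand by its modulus, using $\left|\prod_j\cos(x_jt)\right|=\prod_j|\cos(x_jt)|$, to get $ES\le\frac{2^m}{\pi}\int_0^\pi\prod_j|\cos(x_jt)|\,dt$. This is the only genuinely lossy step, and it is forced: the goal is a bound depending on $m$ alone, uniformly in the frequencies $x_j$, so the delicate cancellations among the cosines must be sacrificed.

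Next I would apply the Cauchy--Schwarz inequality against the constant function $1$ on $[0,\pi]$, namely $\int_0^\pi g\,dt\le\sqrt{\pi}\,\|g\|_2$ with $g=\prod_j|\cos(x_jt)|$, and then the generalized integral H\"older inequality $\|\prod_j f_j\|_2\le\prod_j\|f_j\|_{2m}$ with $f_j=|\cos(x_jt)|$. Together these yield
$$ES(x_1,\ldots,x_m)\le\frac{2^m}{\sqrt{\pi}}\prod_{j=1}^m\left(\int_0^\pi|\cos(x_jt)|^{2m}\,dt\right)^{1/(2m)},$$
so that the product is now fully decoupled into one integral per frequency. The heart of the estimate is that each of these single-frequency integrals is independent of $x_j$: since $x_j$ is a positive integer, the substitution $s=x_jt$ together with the $\pi$-periodicity of $|\cos|^{2m}$ gives $\int_0^\pi|\cos(x_jt)|^{2m}\,dt=\int_0^\pi|\cos s|^{2m}\,ds=2\int_0^{\pi/2}(\cos s)^{2m}\,ds$, which by the Wallis-type formula quoted above equals $\frac{(2m-1)!!}{(2m)!!}\pi$. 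This uniformity — an integer phenomenon that would fail for non-integer frequencies — is precisely what makes the final bound depend on $m$ only.

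Substituting this common value collapses the product,
$$\prod_{j=1}^m\left(\frac{(2m-1)!!}{(2m)!!}\pi\right)^{1/(2m)}=\sqrt{\pi}\left(\frac{(2m-1)!!}{(2m)!!}\right)^{1/2},$$
whence $ES\le 2^m\left(\frac{(2m-1)!!}{(2m)!!}\right)^{1/2}$. To reach the closed form I would rewrite $\frac{(2m-1)!!}{(2m)!!}=\frac{1}{4^m}\binom{2m}{m}$ and use $\binom{2m}{m}=2\binom{2m-1}{m}$, turning the right-hand side into $\left(2\binom{2m-1}{m}\right)^{1/2}$. Finally, for the particular bound on $J_m$, I would identify $J_m$ with an instance of $ES$: comparing the change of variables $t=\pi s$ in \eqref{eq1general} with the formula of Theorem~\ref{thm-bisol} shows that $J_m=ES\!\left(\binom{m}{0},\binom{m}{1},\ldots,\binom{m}{m}\right)$, a count in $m+1$ positive-integer frequencies, so applying the general estimate with $m+1$ in place of $m$ gives $J_m\le\left(2\binom{2m+1}{m+1}\right)^{1/2}$. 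I do not anticipate a serious obstacle, since every analytic ingredient is already in place; the only points demanding care are the decoupling step — where one must check that the H\"older exponent $2m$ matches the number of factors, so the per-frequency integrals appear to the power $1/(2m)$ and combine to a single square root — and the reliance on the integer hypothesis on the $x_j$, which is exactly what licenses replacing $\int_0^\pi|\cos(x_jt)|^{2m}\,dt$ by its $x_j$-free value.
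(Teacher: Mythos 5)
Your proposal is correct and follows essentially the same route as the paper: bound the integrand in \eqref{eq1general} by its modulus, apply Cauchy--Schwarz and then the generalized integral H\"older inequality to decouple the frequencies, and evaluate each $\int_0^\pi|\cos(x_jt)|^{2m}\,dt$ via the integer-frequency substitution and the Wallis formula. The only additions beyond the paper's argument are your explicit verification of the closed-form identity $2^m\bigl(\tfrac{(2m-1)!!}{(2m)!!}\bigr)^{1/2}=\bigl(2\binom{2m-1}{m}\bigr)^{1/2}$ and of the identification $J_m=ES\bigl(\binom{m}{0},\ldots,\binom{m}{m}\bigr)$ with $m+1$ frequencies, both of which the paper leaves implicit and both of which you handle correctly.
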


\begin{remark}
 We know already  that there are more than $2^{(n+1)/2}$ bisections for odd $n$ and at most $2^{n+1}$ possible choices. Our Theorem~\textup{\ref{upperbounds}} implies that the quotient between the solutions set size and the size of  possible solutions space is $\frac{1}{\sqrt{\pi (n+1)/2}}\to 0$, as $n\to\infty$, which certainly was expected.
\end{remark}

 \subsection{A more detailed analysis}

We start with the case of $n$ odd.
As in~\cite{BW15} we will be using the inequality
\[
|\cos(\pi x)|^2\leq \exp(-\pi^2\|x\|^2),
\]
which is valid for all real $x$, where $\|x\|$ is the distance to the nearest integer.
For easy writing, for $n$ fixed, we let $b_j=\binom{n}{j}$ and   $B=\lfloor{\frac{n}{2}}\rfloor=\frac{n-1}{2}$, $n$ odd.
Thus,
\begin{equation}
\begin{split}
\label{eq:Jn2}
&2^{-(n+2)}J_n
=\frac{1}{2}\int_0^1\prod_{j=0}^n \cos\left(\pi x\binom{n}{j}\right)=\int_0^{1/2}\prod_{j=0}^B \cos^2\left(\pi x\binom{n}{j}\right)\\
&\leq\int_0^{\frac{1}{2 b_B}}  \prod_{j=0}^B\cos^2\left(\pi x\binom{n}{j}\right)d\,x + \int_{\frac{1}{2b_B}}^{\frac{1}{2 b_{B-1}}} \prod_{j=0}^{B-1}\cos^2\left(\pi x\binom{n}{j}\right)d\,x \\
& +\cdots+\int_{\frac{1}{2b_1}}^{\frac{1}{2 b_0}}\prod_{j=0}^{0}\cos^2\left(\pi x\binom{n}{j}\right)d\,x.
\end{split}
\end{equation}
Observe now that, if $\frac{1}{2b_{k+1}}\leq x\leq \frac{1}{2b_k}$, then $\frac{b_j}{2b_{k+1}}\leq xb_j\leq \frac{b_j}{2b_k}\leq \frac{1}{2}$, if $j\leq k$, therefore, $\|xb_j\|=xb_j$, and so, with $B=\lfloor{\frac{n}{2}}\rfloor=\frac{n-1}{2}$ and $S_{n,s}:=\sum_{j=0}^{B-s}  \binom{n}{j}^2 $, we have
 \begin{equation}
 \begin{split}
\label{eq:prod}
 \prod_{j=0}^k\cos^2\left(\pi x\ b_j\right)d\,x
 &\leq \prod_{j=0}^k \exp\left(-\pi^2\left\|xb_j\|\right\|^2 \right)
 =\prod_{j=0}^k\exp\left(-\pi^2 \left(xb_j\right)^2 \right) \\
 &=\exp\left(-\pi^2 x^2\sum_{j=0}^k  b_j^2 \right)
 =\exp\left(-\pi^2 x^2 S_{n,B-k}\right).
\end{split}
\end{equation}

Certainly $S_{n,B-k}= \binom{2n}{n}-\binom{n}{k+1}{}_3F_2[1,k+1-n,k+1-n;k+2,k+2;1]$,
using the incomplete sum of powers of binomials coefficients (see~\cite{GKP94}) in terms of the hypergeometric function, but unfortunately this is simply a rewrite of the expression, and will not be very useful in our analysis.

\def\erf{{\rm erf}}

We now let, as it is customary,  $\erf(z)=\frac{2}{\sqrt{\pi}}\int_0^z e^{-t^2}d\,t
 =\frac{2z e^{-z^2}}{\sqrt{\pi}} {}_1F_1\left(1;\frac{3}{2};z^2\right)$,
where ${}_1F_1$ is Gauss' hypergeometric function. It is also known that for $z\gg 1$ (recall that $k!!$ is the double factorial),
\begin{align*}
\erf(z)=\pi^{-1/2}\, \gamma\left(\frac{1}{2},z^2\right)=
  1-\frac{e^{-z^2}}{\sqrt{\pi}}\sum_{k=0}^\infty \frac{(-1)^k (2k-1)!!}{2^k} z^{-2k-1},
 \end{align*}
 where $\gamma(a,z)=\int_0^z t^{a-1}e^{-t}d\,t$ is the lower incomplete gamma function.
  In particular, under $z\gg 1$, we have~(see~\cite{Bl66}, or any book on probabilities)
\begin{equation}
 \label{eq:erf}
1-\frac{e^{-z^2}}{z\sqrt{\pi}}\left(1-\frac{1}{2z^2}+\frac{3}{4z^4} \right)\leq \erf(z)\leq  1-\frac{e^{-z^2}}{z\sqrt{\pi}}.
\end{equation}


\def\erf{{\rm erf}}

Using the inequalities $e^x\geq 1+x+\frac{x^2}{2}$, $e^{-x}\leq 1-x+\frac{x^2}{2}$ and integrating we can find a better bound, but again, in the interest of simplicity, we roughly bound the decreasing function inside the integral and obtain (assume $0<s\leq B$)
\begin{equation}
\begin{split}
\label{eq:ineq-subint}
&\int_{\frac{1}{2b_{B-s+1}}}^{\frac{1}{2 b_{B-s}}} \exp\left(-\pi^2 x^2 S_{n,s}\right)d\,x=\frac{\erf\left(\frac{\pi \sqrt{S_{n,s}}}{2 b_{B-s}} \right) -\erf\left(\frac{\pi \sqrt{S_{n,s}}}{2 b_{B-s+1}} \right)}{2\sqrt{\pi}\sqrt{S_{n,s}}} \\
&\leq  \exp\left(- \frac{\pi^2 S_{n,s}}{4b_{B-s+1}^2} \right)\left(\frac{1}{2 b_{B-s}}-\frac{1}{2 b_{B-s+1}} \right)\\
  \end{split}
  \end{equation}

%
%

We now need to estimate~\eqref{eq:ineq-subint}.
While it is known~\cite{FL05} (see also, Polya and Szeg\"o~\cite[Vol. 1, Prob. 40, P. 42]{PZ72}) that
\begin{align*}
\sum_{j=0}^n \binom{n}{j}^r \sim \left(2^n \sqrt{\frac{2}{\pi n}}\right)^r \sqrt{\frac{\pi n}{2r}},
\end{align*}
as well as the asymptotic for the incomplete sum of powers of binomials
(we let $I:=\{j\in \mathbb{N}\,|\, -a\sqrt{\frac{n}{4}}+\frac{n}{2}\leq j\leq a\sqrt{\frac{n}{4}}+\frac{n}{2}$)
\begin{align*}
\sum_{j\in I} \binom{n}{j}^r\sim 2^{nr}\, \sqrt{\frac{n}{4}}\, \left(\frac{\pi n}{2}\right)^{-r/2} \int_{-a+\frac{2}{\sqrt{n}}}^{-a+\frac{2}{\sqrt{n}}} e^{-rt^2/2}d\, t,
\end{align*}
again, in the interest of simplicity, letting $\alpha_s:=\frac{n}{\lfloor{\frac{n}{2}}\rfloor-s}$, we prefer to use the estimate
$\displaystyle \sum_{k=0}^{\lfloor{\frac{n}{2}}\rfloor - s} \binom{n}{k} =2^{n\left(H(\alpha_s)+o(1)\right)}$,
where $H(\alpha)=-\alpha\log_2(\alpha)-(1-\alpha)\log_2(1-\alpha)$ is the binary entropy function,
which easily implies the inequality 
$\displaystyle \sum_{j=0}^{\lfloor{\frac{n}{2}}\rfloor - s} \binom{n}{j} < 2^n e^{-2s^2/n}$ for $0 \leq s  \leq \lfloor{\frac{n}{2}}\rfloor$,
rendering the bounds for $S_{n,s}=\sum_{j=0}^{\lfloor{\frac{n}{2}}\rfloor-s} \binom{n}{j}^2$,
\begin{align*}
\frac{1}{B-s+1} \left(\sum_{k=0}^{\frac{n}{2} - s} \binom{n}{k} \right)^2< S_{n,s}< \binom{n}{\lfloor{\frac{n}{2}}\rfloor-s}\sum_{j=0}^{\lfloor{\frac{n}{2}}\rfloor-s} \binom{n}{j}<2^n e^{-\frac{2s^2}{n}} \binom{n}{\lfloor{\frac{n}{2}}\rfloor-s},
\end{align*}
or the  simpler
\begin{align*}
\frac{1}{B+1}2^{2n\left(H(\alpha_s)+o(1)\right)}\leq \frac{1}{B-s+1} 2^{2n\left(H(\alpha_s)+o(1)\right)}< S_{n,s}<2^n e^{-\frac{2s^2}{n}} \binom{n}{\lfloor{\frac{n}{2}}\rfloor-s},
\end{align*}
 the lower bound being obtained by the Cauchy-Schwarz inequality.
 We can certainly remove the dependence on $o(1)$ by using the inequalities
 \[
 \frac{1}{\sqrt{8n\alpha_s(1-\alpha_2)}} 2^{n H(\alpha_s)}\leq \sum_{k=0}^{\frac{n}{2} - s} \binom{n}{k} \leq 2^{n H(\alpha_s)}.
 \]

 Thus,~\eqref{eq:ineq-subint} becomes (using $\binom{n}{k+1}-\binom{n}{k}=\frac{n-2k}{k+1}\binom{n}{k}$)
 \begin{align*}
 &\int_{\frac{1}{2b_{B-s+1}}}^{\frac{1}{2 b_{B-s}}} \exp\left(-\pi^2 x^2 S_{n,s}\right)d\,x
 \leq \exp\left( -\frac{\pi^2 2^{2n (H(\alpha_s)+o(1))}}{4 (B+1) b_{B-s+1}^2}\right) \frac{b_{B-s+1}-b_{B-s}}{2b_{B-s}b_{B-s+1}}\\
 &\leq \exp\left( -\frac{\pi^2 2^{2n (H(\alpha_s)+o(1))}}{4 (B+1) b_{B-s+1}^2}\right) \frac{n-2B+2s-1}{2(B-s+1)b_{B-s+1}}\\
 &= \exp\left( -\frac{\pi^2 2^{2n (H(\alpha_s)+o(1))}}{4 (B+1) b_{B-s+1}^2}\right) \frac{s}{(B-s+1)b_{B-s+1}}.
 \end{align*}

We next estimate   our integral on  $[0,\frac{1}{2b_B}]$, and use the known identity $\sum_{j=0}^{\lfloor{n/2}\rfloor} \binom{n}{j}^2=\binom{2n}{n}$. Thus,
\begin{align*}
&\int_0^{\frac{1}{2 b_B}}  \prod_{j=0}^B\cos^2\left(\pi xb_j\right)d\,x\leq \int_0^{\frac{1}{2 b_B}} \prod_{j=0}^B\exp\left(-\pi^2 \|xb_j\|^2\right)d\,x\\
&= \int_0^{\frac{1}{2 b_B}} \exp\left(-\pi^2x^2  \sum_{j=0}^B b_j^2  \right)
=\int_0^{\frac{1}{2 b_B}} \exp\left(-\pi^2x^2 \binom{2n}{n}  \right)\\
&=\frac{\erf\left(\frac{\pi\sqrt{\binom{2n}{n}}}{2b_B}  \right)}{2\sqrt{\pi \binom{2n}{n}}}
\leq \frac{1}{2\sqrt{\pi \binom{2n}{n}}} \left(1-\frac{2b_B}{\pi\sqrt{\pi \binom{2n}{n}}}\exp\left(-\frac{\pi^2 \binom{2n}{n}}{4b_B^2} \right) \right),
\end{align*}
using~\eqref{eq:erf}.

Next, we consider the case of $n$ being even, but $n$ is not a power of $2$ (this will be treated in the next section).
 Under this assumption, we see that $v_2\left(\binom{n}{ {n}/{2}}\right)\geq 4$.

 As before,  for $n$ fixed, we let $b_j=\binom{n}{j}$ and   $B=\lfloor{\frac{n}{2}}\rfloor=\frac{n}{2}$, $n$ even. Since   $\cos\left(\pi x \binom{n}{n/2}\right)=0$ for $x=\frac{2k+1}{2\binom{n}{n/2}}$, $k\in\mathbb{Z}$, we see that the expression inside the integral of $J_n$, namely,  $\cos\left(\pi x\binom{n}{n/2}\right) \prod_{j=0}^{n/2-1}\cos^2\left(\pi x\binom{n}{j}\right) $
 is positive for $x$ in  $\left[0,\frac{1}{2\binom{n}{n/2}}\right)\cup \left(\frac{3}{2\binom{n}{n/2}},\frac{5}{2\binom{n}{n/2}}\right)\cup\cdots\cup \left(\frac{\binom{n}{n/2}-1}{2\binom{n}{n/2}},\frac{1}{2}\right]$, and negative in $\left(\frac{1}{2\binom{n}{n/2}},\frac{3}{2\binom{n}{n/2}}\right)\cup \left(\frac{5}{2\binom{n}{n/2}},\frac{7}{2\binom{n}{n/2}}\right)\cdots$. Thus, $J_n$ is the area on the first set of intervals minus the area on the second set of intervals.
 Using this observation, we see that the method we used for the case of $n$ odd applies here, as well, and the bound remains the same (with the obvious change for $B$).

Putting all these estimates together, we thus obtain the following result (by abuse, we include the case of Hamming weight 1, since the bound of Theorem~\ref{powerof2} is stronger in that case).
\begin{thm}
\label{newthm-nodd}
Let $\alpha_s=\frac{n}{\lfloor{\frac{n}{2}}\rfloor-s}$, $n\geq 5$. Then
\begin{align*}
&2^{-(n+2)} J_n \leq \frac{\erf\left(\frac{\pi\sqrt{\binom{2n}{n}}}{2\binom{n}{\lfloor{n/2}\rfloor}}  \right)}{2\sqrt{\pi \binom{2n}{n}}}\\
&\quad +
\sum_{s=1}^{\lfloor{n/2}\rfloor-1} \exp\left( -\frac{\pi^2 2^{2n (H(\alpha_s)+o(1))}}{4 (\lfloor{n/2}\rfloor+1) b_{\lfloor{n/2}\rfloor-s+1}^2}\right) \frac{s}{(\lfloor{n/2}\rfloor-s+1)b_{\lfloor{n/2}\rfloor-s+1}}.
\end{align*}
\end{thm}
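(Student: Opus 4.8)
The plan is to assemble the estimates developed above, taking as the starting point the integral representation of Theorem~\ref{thm-bisol}. For $n$ odd I would first exploit the symmetry $\binom{n}{j}=\binom{n}{n-j}$: pairing the factor indexed $j$ with the one indexed $n-j$ in the product $\prod_{j=0}^n\cos(\pi x b_j)$ turns each matched pair into a square, and folding the integration window about $x=\tfrac12$ collapses $2^{-(n+2)}J_n$ to $\int_0^{1/2}\prod_{j=0}^B\cos^2(\pi x b_j)\,dx$ with $B=\frac{n-1}{2}$, exactly the first line of~\eqref{eq:Jn2}. Since $b_0\le b_1\le\cdots\le b_B$, the breakpoints $\frac{1}{2b_k}$ are increasing, so I partition $[0,\tfrac12]$ into the consecutive intervals $\big[\frac{1}{2b_{k+1}},\frac{1}{2b_k}\big]$ and bound the whole integral by the sum displayed in~\eqref{eq:Jn2}.

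On the interval $\big[\frac{1}{2b_{k+1}},\frac{1}{2b_k}\big]$ every index $j\le k$ satisfies $xb_j\le\tfrac12$, hence $\|xb_j\|=xb_j$; discarding the remaining factors (each at most $1$) and invoking the pointwise Gaussian bound $|\cos\pi x|^2\le e^{-\pi^2\|x\|^2}$ gives~\eqref{eq:prod}, namely $\prod_{j=0}^k\cos^2(\pi x b_j)\le\exp\!\big(-\pi^2x^2 S_{n,B-k}\big)$. Each piece is then a genuine Gaussian integral, evaluated in closed form through $\erf$ as in~\eqref{eq:ineq-subint}, and the one-sided estimate~\eqref{eq:erf} converts the difference of error functions into a single exponential times the length factor $\frac{1}{2b_{B-s}}-\frac{1}{2b_{B-s+1}}$, with $s=B-k$.

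Next I would feed the lower bound $S_{n,s}\ge\frac{1}{B+1}2^{2n(H(\alpha_s)+o(1))}$, obtained from Cauchy--Schwarz together with the entropy estimate for the partial sum $\sum_{k\le n/2-s}\binom{n}{k}$, into the exponent, and simplify the length factor using $\binom{n}{k+1}-\binom{n}{k}=\frac{n-2k}{k+1}\binom{n}{k}$; this collapses $\frac{b_{B-s+1}-b_{B-s}}{2b_{B-s}b_{B-s+1}}$ to $\frac{s}{(B-s+1)b_{B-s+1}}$ and produces exactly the summand of the claimed bound for $1\le s\le B-1$. The first interval $[0,\frac{1}{2b_B}]$ is treated separately, since there all of $b_0,\dots,b_B$ contribute and $\sum_{j=0}^B b_j^2=\binom{2n}{n}$, so the same Gaussian bound and~\eqref{eq:erf} yield the leading $\erf$ term; adding it to the $s$-sum gives the displayed inequality for odd $n$.

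For $n$ even (but not a power of $2$) the middle coefficient $b_{n/2}$ occurs an odd number of times, so $\cos(\pi x b_{n/2})$ is not squared and changes sign, and here $v_2\!\big(\binom{n}{n/2}\big)\ge4$. The integrand is positive on $\big[0,\frac{1}{2b_{n/2}}\big)$ and alternates sign on the successive intervals of length $\frac{1}{2b_{n/2}}$, so $J_n$ is the area over the positive intervals minus that over the negative ones; bounding by the positive part (equivalently, replacing the lone middle factor by its absolute value) only enlarges the estimate, and the odd-$n$ argument then carries over verbatim with $B=\frac n2$. I expect the main obstacles to be threefold: making this sign-cancellation argument for even $n$ fully rigorous; controlling the $o(1)$ in the entropy exponent uniformly in $s$ so that the bound holds as stated; and handling the degenerate endpoint $s=\lfloor n/2\rfloor$, where $\alpha_s$ is undefined and the entropy estimate fails (which is precisely why the sum stops at $\lfloor n/2\rfloor-1$, the leftover interval $\big[\frac{1}{2b_1},\frac12\big]$ and the Hamming-weight-$1$ case being absorbed via the stronger Theorem~\ref{powerof2}). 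One must also check that the arguments of $\erf$ are $\gg1$, which is where the hypothesis $n\ge5$ enters so that~\eqref{eq:erf} applies.
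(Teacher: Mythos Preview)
Your proposal is correct and follows essentially the same route as the paper: fold the integral by symmetry, partition $[0,\tfrac12]$ at the breakpoints $\frac{1}{2b_k}$, drop the factors with large index, apply the Gaussian majorant $|\cos\pi x|^2\le e^{-\pi^2\|x\|^2}$, bound each Gaussian piece by its value at the left endpoint times the interval length (the paper does this directly rather than via~\eqref{eq:erf}, but the two are equivalent), insert the Cauchy--Schwarz/entropy lower bound for $S_{n,s}$, and treat $[0,\tfrac{1}{2b_B}]$ separately with $S_{n,0}=\binom{2n}{n}$ (up to the same factor-of-two slip the paper makes for odd $n$). One small clarification: the parenthetical ``Hamming weight $1$'' in the paper refers to $n$ itself being a power of $2$ (so $n$ has binary Hamming weight $1$), which is the case excluded from the even-$n$ discussion and covered instead by Theorem~\ref{powerof2}; it is unrelated to the missing $s=\lfloor n/2\rfloor$ interval, which the paper simply drops (and which your proposal rightly flags as a loose end).
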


\begin{remark}
With a little more work, one can find that the expression above is  $O\left(\frac{2^n}{n}\right)$ (in fact,  $J_n\leq \frac{2^{n+2}}{n}$).
\end{remark}

\section{The  $2^n$ case}
\label{sec:pow2}

We now treat the case of binomial coefficients corresponding to a power of~2.
\begin{thm}
\label{powerof2}
If $N=2^{n}$, $n\geq 3$, then $J_{N}\leq 0.3258\cdot  2^{3\cdot 2^{n-2}-2^{\frac{n-3}{2}}}$, as $n\to\infty$.
 \end{thm}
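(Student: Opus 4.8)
The plan is to start from the integral representation of Theorem~\ref{thm-bisol}. Since $N=2^{n}$ is even, pairing $j$ with $N-j$ and using the symmetry $s\mapsto 1-s$ (which fixes the integrand because $\prod_{j}(-1)^{\binom{N}{j}}=(-1)^{2^{N}}=1$) reduces the count to
\[
2^{-(N+2)}J_{N}=\int_{0}^{1/2}\cos\!\left(\pi x\,b_{N/2}\right)\prod_{j=0}^{N/2-1}\cos^{2}\!\left(\pi x\,b_{j}\right)d\,x ,
\]
where $b_{j}=\binom{N}{j}$. The whole argument rests on the $2$-adic structure of the $b_{j}$: by Kummer's theorem $v_2\!\left(\binom{2^{n}}{j}\right)=n-v_2(j)$ for $0<j<2^{n}$, so among $0\le j\le N/2$ only $b_{0}=1$ is odd, the central coefficient has $v_2(b_{N/2})=1$ (that is, $b_{N/2}=2M$ with $M$ odd, as already noted in the excerpt), and every remaining $b_{j}$ with $0<j<N/2$ is divisible by $4$. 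Hence the fast product $\prod_{0<j<N/2}\cos^{2}(\pi x b_{j})$ is periodic of period $1/4$, and the indices split into valuation classes $V_{k}=\{j:v_2(b_{j})=k\}$ with $|V_{k}|=2^{k-1}$ for $1\le k\le n$. The two top classes $V_{n}$ (the odd $j$) and $V_{n-1}$ (the $j\equiv 2\bmod 4$) together account for $|V_{n}|+|V_{n-1}|=2^{n-1}+2^{n-2}=3\cdot 2^{n-2}$ of the signs, which is exactly the exponent to be produced.

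First I would bound the integrand by the Gaussian inequality $|\cos(\pi y)|^{2}\le\exp(-\pi^{2}\|y\|^{2})$ used earlier in the excerpt, and split $[0,1/2]$ into the central peak $\left[0,\tfrac{1}{2b_{N/2}}\right]$ and its complement. On the central peak one has $\|xb_{j}\|=xb_{j}$ for every $j\le N/2$, so the product collapses to $\exp(-\pi^{2}x^{2}\sum_{j}b_{j}^{2})$; invoking $\sum_{j}\binom{N}{j}^{2}=\binom{2N}{N}$ together with the error-function estimate~\eqref{eq:erf} yields the leading Gaussian term exactly as in the computation preceding Theorem~\ref{newthm-nodd}. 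This piece is only of polynomial size and will be absorbed into the constant.

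The heart of the matter is the complementary oscillatory region, where the divisibility is indispensable. Using the period-$1/4$ structure I would fold $\left[\tfrac{1}{2b_{N/2}},\tfrac12\right]$ onto a single period and estimate the folded integral by the mean value $\prod_{0<j<N/2}\overline{\cos^{2}}=2^{-(N/2-1)}$ corrected by the resonances, i.e.\ the vanishing signed sub-sums among the high-valuation coefficients. In the equivalent arithmetic picture, writing $\sum_{j}\delta_{j}b_{j}=\sum_{k=0}^{n}2^{k}S_{k}$ with $S_{k}=\sum_{j\in V_{k}}\delta_{j}(b_{j}/2^{k})$, reduction modulo $2^{n-1}$ gives $\sum_{k\le n-2}2^{k}S_{k}\equiv 0\pmod{2^{n-1}}$, a congruence involving only the low classes $V_{0},\dots,V_{n-2}$ (which contain just $2^{n-2}+1$ signs). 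Thus the $3\cdot 2^{n-2}$ signs in $V_{n-1}\cup V_{n}$ are essentially free while the low-order signs are pinned, and a local central limit estimate for the free signs is what I expect to deliver both the prefactor $0.3258\approx 1/\sqrt{3\pi}$ and the sub-Gaussian width correction $2^{-2^{(n-3)/2}}$ (an $O(\sqrt N)$-order term, since $2^{(n-3)/2}=\sqrt{N}/(2\sqrt2)$) in the exponent.

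The main obstacle I anticipate is precisely this control of the oscillatory region: one must show that the resonances coming from vanishing signed sub-sums of the $b_{j}$ do not inflate the folded integral beyond $2^{3\cdot 2^{n-2}}$, and then extract the exact constant $1/\sqrt{3\pi}$ and the $O(\sqrt N)$ correction from the resulting Gaussian/erf asymptotics. Everything hinges on the rigidity forced by $v_2\!\left(\binom{2^{n}}{j}\right)=n-v_2(j)$: without it the coefficients share no common factor and the oscillatory region only returns the generic $O(2^{N}/N)$ bound of the previous section, whereas with it three quarters of the signs decouple from the low-order congruence and the sharper estimate follows.
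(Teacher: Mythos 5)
You have correctly extracted the structural skeleton that drives the paper's proof: Kummer's identity $v_2\left(\binom{2^n}{j}\right)=n-v_2(j)$, the partition of the coefficients into $2$-adic valuation classes, and the identification of the two top classes (odd $j$ and $j\equiv 2\pmod 4$, totalling $M=3\cdot 2^{n-2}$ coefficients) as the source of the exponent. However, both of your key steps have genuine gaps. The pinning step is the more serious one. Reducing $\sum_j\delta_j b_j=0$ modulo $2^{n-1}$ gives a single congruence on the low-class signs, and that congruence does not pin them: for example, since $\binom{2^n}{2^{n-2}}=\binom{2^n}{3\cdot 2^{n-2}}$, the assignment giving these two equal coefficients opposite signs contributes $0$ to the sum and so satisfies every congruence, yet ruling out precisely this kind of split is what pinning means. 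The paper proves the much stronger statement that all coefficients in valuation classes $0,1,\dots,n-2$ lie in the \emph{same} bin, and it does so by an inductive case analysis (its Cases 1 and 2) working modulo $2^4$ and $2^5$ with Granville's exact residues of binomial coefficients modulo prime powers (e.g., $\binom{2^n}{2^{n-1}}\equiv 6\pmod{2^4}$); a one-shot modular reduction cannot substitute for that argument.

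Second, your endgame is not the paper's and, as stated, cannot succeed. After pinning, the paper never returns to the oscillatory integral: it converts the residual freedom into the subset-sum equation $A+B=R_{n-1}=2^{2^{n-1}-1}\left(2^{2^{n-1}-1}-1\right)$ in the $M$ free coefficients (its equation \eqref{eq:eqAB}), applies Freiman's formula (Theorem~\ref{freiman}) to that single equation, and bounds the resulting integral by H\"older's inequality together with the symmetry $\binom{2^n}{j}=\binom{2^n}{2^n-j}$, obtaining at most $\binom{M}{M/2}\sim 2^M/\sqrt{\pi M/2}$. Your ``local central limit estimate'' is the right heuristic for exactly this count, but note what it actually yields: a prefactor of order $1/\sqrt{M}$, that is, a constant times $2^{-\frac{n-3}{2}}$, since $2^M/\sqrt{\pi M/2}=\frac{1}{\sqrt{3\pi}}\,2^{3\cdot 2^{n-2}-\frac{n-3}{2}}$ (and $1/\sqrt{3\pi}\approx 0.3258$, as you noted). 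No Gaussian or erf asymptotics can produce the doubly exponential correction $2^{-2^{\frac{n-3}{2}}}$ that you promise to extract; what any CLT-type or H\"older-type argument delivers is the exponent $3\cdot 2^{n-2}-\frac{n-3}{2}$, so that part of your plan would fail rather than recover the literal statement. Finally, the step you yourself flag as the main obstacle --- controlling the ``resonances'' in the folded oscillatory integral --- is left entirely open in your proposal, and the Freiman-plus-H\"older route is precisely how the paper avoids ever having to confront it.
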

 \begin{proof}
 We first recall Kummer's result (see also, the paper by Granville~\cite{Gr97}), which states that the $p$-adic valuation ($p$ is a prime number) of a binomial coefficient (for any $N,k$) is
\[
v_p\left(\binom{N}{k}\right)=\sum_{i} \frac{k_i+h_i-N_i}{p-1},
\]
where $N_i,k_i,h_i$ are the digits of $N,k,N-k$, respectively, in their base $p$ representations. Equivalently,
$v_p\left(\binom{n}{k}\right)$ is the number of {\em borrows} when subtracting $k$ from $N$ in base $p$
(a result of Kummer rediscovered by Goetgheluck~\cite{Goe87}). When $N=2^n$ and $n,k\geq 1$, this reveals that
\begin{equation}
\label{eq:v2}
v_2\left(\binom{2^n}{k}\right)+v_2(k)=n.
\end{equation}

It may be useful to visualize our method. The 2-adic valuation of the row of the Pascal's triangle corresponding to $N=2^{2m}$ is
the merging of the $k$-th  rows corresponding to the binomial coefficients $\binom{2^n}{2^k(2s+1)}$, $s\geq 0$. Observe that every  row will have twice as many entries as the one above, disregarding $0$-th row corresponding to the endpoints with the 2-adic valuations $0,0$, occurring at
halves of the intervals above, starting with the 2-adic valuation of the middle binomial $\binom{2^{2m}}{2^{2m-1}}$.
For example, if $n=4$, then the tableaux of 2-adic valuations is
{\small
\begin{equation}
\label{eq:visual}
\begin{array}{ccccccccccccccccc}
 0 &  &   &   &   &     &   &   &   &   &   &      &   &   &   &    & 0 \\
   &  &   &   &   &     &   &   & 1 &   &   &      &   &   &   &    &   \\
   &   &   &   & 2 &     &   &   &   &   &      &   & 2 &   &   &      &   \\
   &   & 3 &   &   &     & 3 &   &   &   & 3 &    &   &   & 3 &   &   \\
   & 4 &   & 4 &   & 4 &     & 4 &   & 4 &   & 4 &   & 4    &   & 4 &   \\
\end{array}
\end{equation}
}
which, by merging will become
\[
\left\{v_2\left(\binom{2^4}{k}\right)\right\}_{k=0}^{16}=\{0, 4, 3, 4, 2, 4, 3, 4, 1, 4, 3, 4, 2, 4, 3, 4, 0\}.
\]

While we conjecture that if $n$ is even, the only  possible bisections are (for $n=2m$)
$B_1=\left\{ \binom{2^{n}}{2k} \right\}_{k=0}^{2^{n-1}-1}$ and  $B_2=\left\{ \binom{2^{n}}{2k+1} \right\}_{k=0}^{2^{n-1}}$, we are unable to show that, but we will use an inductive procedure and show that every row (of our visual aid interpretation), except possibly for the last two rows belong to the same ``bin'', say $B_1$, of a bisection.

For easy writing, for $n$ fixed, we let $b_k:=\binom{2^n}{k}$.
Since $v_2(b_{0})=v_2(b_{2^n})=0$, then it is obvious that the endpoint binomial coefficients occur in the same ``bin'',
say, $B_1$, otherwise, the sums of both of these bins is not even, let alone being equal to $2^{n-1}$. We now let $b_0,b_{2^n}\in B_1$.

Next, we argue that for $n\geq 2$, $b_0,b_{2^{n-1}},b_{2^n}$ belong to the same bin, say $B_1$ (observe that $v_2(b_{2^{n-1}})=1$);
otherwise,  $b_0,b_{2^{n-1}}\in B_1$, $b_{2^n}\in B_2$, say. If that is the case,  then
\begin{align*}
2^{N-1}&=\sum_{b_k\in B_1} b_k =b_0+  b_{2^{n-1}}+\sum_{\substack{k\neq 0, 2^{n-1} \\ b_k\in B_1}} b_k ,\\
2^{N-1}&=\sum_{b_k\in B_2} b_k = b_{2^n}+\sum_{\substack{k\neq  2^{n} \\ b_k\in B_2}} b_k ,
\end{align*}
but that is impossible since both  sums are now odd, but $2^{N-1}$ is even.

Assume now that $b_0,b_{2^{n-1}},b_{2^n}\in B_1$. Further, we argue that, if $n\geq 4$, $b_{2^{n-2}},b_{3\cdot 2^{n-2}}$
also belong to $B_1$. We assume below the opposite.

\noindent
{\em Case $1$.}
If $b_{2^{n-2}},b_{3\cdot 2^{n-2}}$ are split between $B_1$, $B_2$, then, without loss of generality
(note that $b_{ 2^{n-2}}=b_{3\cdot 2^{n-2}}$),  we may assume
\begin{align*}
 2^{N-1}&=2b_0+b_{2^{n-2}}+  b_{2^{n-1}}+\sum_{\substack{v_2(k)\neq 0,n, {n-1} \\ b_k\in B_1}} b_k,\\
2^{N-1}&=b_{3\cdot 2^{n-2}}+\sum_{\substack{v_2(k)\neq  {n-2} \\ b_k\in B_2}} b_k,
\end{align*}
which implies that
\[
v_2\left(\frac{1}{4}b_{3\cdot 2^{n-2}}+\frac{1}{4}\sum_{\substack{v_2(k)\neq  {n-2}\\ b_k\in B_2}} b_k \right)
=0\geq N-3\geq 1, \text{  since $n\geq 4$,}
\]
but this is  impossible.

\noindent
{\em Case $2$.}
If $b_{2^{n-2}},b_{3\cdot 2^{n-2}}$ both belong to $B_2$, then
\begin{equation}
\begin{split}
\label{eq:level2}
2^{N-1}&=2b_0+  b_{2^{n-1}} +\sum_{\substack{v_2(k)\neq 0,n, {n-1} \\ b_k\in B_1}} b_k,\\
2^{N-1}&=2b_{2^{n-2}}+\sum_{\substack{v_2(k)\neq  {n-2} \\ b_k\in B_2}} b_k.
\end{split}
\end{equation}
Applying~\cite[Theorem 1]{Gr97}, we see that $b_{2^{n-1}}\equiv 6\pmod {2^4}$,
which implies that $v_2\left(2b_0+ b_{2^{n-1}}\right)=3$, and since $N\geq 16$, then we must have
$ v_2\left(\sum_{\substack{v_2(k)\neq 0, {n-1} \\ b_k\in B_1}} b_k \right)=3$, and so, $b_{t\cdot 2^{n-3}}\in B_1$,
for some odd $t$. Further, there exists also an odd $t'$ such that
$b_{t'\cdot 2^{n-3}}\in B_2$, because otherwise,
$4\leq v_2\left(\sum_{\substack{v_2(k)\neq {n-2} \\ b_k\in B_2}} b_k \right)=v_2(2^{N-1}-2b_{2^{n-2}})=3$, an impossibility.
Thus, $B_1$ must contain $b_{t\cdot 2^{n-3}}$, $t\in I_1^{(3)}\neq \emptyset$, and
$B_2$ must contain $b_{t\cdot 2^{n-3}}$, $t\in I_2^{(3)}=\{1,3,5,7\}\setminus I_1^{(3)}\neq \emptyset$.
 Let $|I_1^{(3)}|=n_1,|I_2^{(3)}|=n_2=4-n_1$. Since $v_2\left(\sum_{t\in I_2^{(3)}} b_{t\cdot 2^{n-3}}+
 \sum_{\substack{v_2(k)\neq {n-2},{n-3} \\ b_k\in B_2}} b_k \right)=v_2(2^{N-1}-2b_{2^{n-2}})=3$,
 we infer that $|I_1^{(3)}|,|I_2^{(3)}|\in\{1,3\}$. Next,
\begin{align*}
  & v_2\left(2^{N-1}-\sum_{\substack{v_2(k)\neq 0,n,{n-1},{n-3} \\ b_k\in B_1}} b_k \right)
 = v_2\left(2+b_{2^{n-1}}+\sum_{t\in I_1^{(3)}} b_{t\cdot 2^{n-3}} \right)\\
 &=
\begin{cases}
4 & \text{ if } I_1^{(3)}=\{3,5,7\}, I_1^{(3)}=\{1,3,5\}, I_1^{(3)}=\{5\},I_1^{(3)}=\{3\},\\
6 & \text{ if }  I_1^{(3)}=\{7\}, I_1^{(3)}=\{1\},\\
7 & \text{ if } I_1^{(3)}=\{1,5,7\}, I_1^{(3)}=\{1,3,7\}.
\end{cases}
\end{align*}
Further, since $I_2^{(3)}=\{1,3,5,7\}\setminus I_1^{(3)}$, then
\begin{align*}
 & v_2\left(2^{N-1}-\sum_{\substack{v_2(k)\neq {n-2},{n-3} \\ b_k\in B_2}} b_k \right)
  = v_2\left(2b_{2^{n-2}}+\sum_{t\in I_2^{(3)}} b_{t\cdot 2^{n-3}} \right)\\
 &=
\begin{cases}
4 & \text{ if } I_2^{(3)}=\{1\}, I_2^{(3)}=\{7\}, I_2^{(3)}=\{1,3,7\},I_2^{(3)}=\{1,5,7\},\\
6 & \text{ if }  I_2^{(3)}=\{1,3,5\}, I_2^{(3)}=\{3,5,7\},\\
7 & \text{ if } I_2^{(3)}=\{3\}, I_2^{(3)}=\{5\}.
\end{cases}
\end{align*}

Now, assuming $N\geq 6$, we argue modulo $2^5$ in~\eqref{eq:level2}. Since $-2b_{2^{n-2}}\equiv 8\pmod {2^5}$, then we must have $I_2^{(3)}\in\{ \{1,3,5\}, \{3,5,7\},\{3\},  \{5\}\}$. For the complement $I_1^{(3)}=\bar I_2^{(3)}$, using~\cite{Gr97}, we compute the residues modulo $2^5$ of the sum of binomial coefficients $b_{t\cdot 2^{n-3}}$, $t\in I_1^{(3)}$ and obtain that the residues are always $24\pmod {2^5}$, which does not equal the residue of $(2b_0+b_{2^{n-1}})\equiv 8\pmod {2^5}$, obtaining a contradiction.

This argument will inductively work up to the $(n-1)$-st row of ~\eqref{eq:visual}, where the $2$-adic valuation of the $b_k$ for every odd $k$ attains its maximum $n$.
We next assume that there are some $b_{2k}$, $k$ odd, that belong to $B_2$, and so,
\begin{align*}
2^{N-1}&=2b_0+b_{2^{n-1}}+(b_{2^{n-2}}+b_{3\cdot 2^{n-2}})+\cdots +\sum_{\substack{v_2(k)=1 \\ b_k\in B_1}} b_k +\sum_{\substack{v_2(k)=0 \\ b_k\in B_1}} b_k \\
2^{N-1}&= \sum_{\substack{v_2(k)=1 \\ b_k\in B_2}} b_k +\sum_{\substack{v_2(k)=0 \\ b_k\in B_2}} b_k .
\end{align*}
Label 
$A:=\sum_{\substack{v_2(k)=1 \\ b_k\in B_1}} b_k$, $B:=\sum_{\substack{v_2(k)=0 \\ b_k\in B_1}} b_k$.
It is known~\cite[Theorem 3]{Len03} that the   sum of all binomial coefficients on the $k$-th row of~\eqref{eq:visual} has the 2-adic valuation equal to $2^k-1$, that is, for
\begin{equation}
\label{eq:Rk}
R_k:=\sum_{t=0}^{2^{k-1}-1} \binom{2^n}{(2t+1)2^{n-k}},\quad v_2\left(R_k\right)=2^k-1.
\end{equation}
From~\eqref{eq:Rk}, we know that $v_2(R_k)=2^k-1$.
It is also not difficult to find that $R_n=2^{2^n-1}$ and $R_{n-1}=2^{2^{n-1}-1} \left(2^{2^{n-1}-1}-1 \right)$.
Observe that
\begin{align*}
& \sum_{\substack{v_2(k)=1 \\ b_k\in B_2}}b_k=R_{n-1}-\sum_{\substack{v_2(k)=1 \\ b_k\in B_1}}b_k=R_{n-1}-A, \text{ and }\\
&\sum_{\substack{v_2(k)=0 \\ b_k\in B_2}}b_k=R_{n}-
\sum_{\substack{v_2(k)=1 \\ b_k\in B_1}} b_k=2^{N-1}-B.
\end{align*}
We therefore get
\begin{align}
&2^{2^n-1} = 2b_0+b_{2^{n-1}}+\sum_{k=2}^{n-2} R_k+A+B\nonumber\\
&2^{2^{n-1}-1} \left(2^{2^{n-1}-1}-1 \right)=A+B.\label{eq:eqAB}
\end{align}

While we conjecture that there are only two bisections for $n$ even and $6$ bisections for $n$ odd (supported by the included data), we are unable to show that. Instead, we find an upper bound for $J_{2^{n}}$, which is better than the one given by Theorem~\ref{newthm-nodd}.

We now use Freiman's Theorem~\ref{freiman}, with $M:=2^{n-1}+2^{n-2}=3\cdot 2^{n-2}$  variables (this is the number of binomial coefficients $\binom{2^n}{k}$, where $v_2(k)=0,1$), $b:=R_{n-1}=2^{2^{n-1}-1} \left(2^{2^{n-1}-1}-1 \right)$,  $a_j=\binom{2^n}{2(2j-1)}$, $1\leq j\leq 2^{n-2}$, and $a_j=\binom{2^n}{2(j-2^{n-2})-1)}$, $2^{n-2}+1\leq j\leq 2^{n-2}+2^{n-1}$, to obtain that the number of ways of solutions for the equation~\eqref{eq:eqAB} is (we shall use again H\"older inequality, as well as $a_j=a_{2^n-j}$ below; also, set $b_{j}:=a_j, 1\leq j\leq 2^{n-3}, b_{j}:=a_{j+2^{n-3}}, 2^{n-3}+1\leq j\leq 2^{n-2}+2^{n-3}$)
\begin{align*}
J_b
&= \int_0^1 e^{-2\pi i x b}\prod_{j=1}^M (1+e^{2\pi i x a_j})d\, x
= \int_0^1 e^{-2\pi i x b}\prod_{j=1}^M \left(2 \cos(\pi i x a_j ) e^{\pi i x a_j}\right)d\, x\\
&=2^M \int_0^1 e^{-2\pi i x b}\prod_{j=1}^M \cos(\pi i x a_j ) e^{\sum_{j=1}^M \pi i x a_j}d\, x\\
&=2^M \int_0^1 e^{-2\pi i x b}\prod_{j=1}^M \cos(\pi i x a_j ) e^{ \pi i x (R_{n-1}+R_n)}d\, x\\
&=2^M \int_0^1 e^{-2\pi i x b}\prod_{j=1}^M \cos(\pi i x a_j ) e^{\pi i x (b+2^{2^n-1})}d\, x\\
&=2^M \int_0^1 e^{\pi i x (2^{2^n-1}-b)}\prod_{j=1}^{M/2} \cos^2(\pi i x b_j )d\, x\\
&\leq  2^M \left(\prod_{j=1}^{M/2} \int_0^1 \cos^M(\pi x b_j) \right)^{2/M}=2^M \frac{1}{2^M} \binom{M}{M/2}\\
&\sim \frac{2^M}{\sqrt{\pi M/2}}\sim 0.3258\cdot  2^{3\cdot 2^{n-2}-2^{\frac{n-3}{2}}},
\end{align*}
and the theorem is shown.
\end{proof}
\begin{conj}
We conjecture that
$J_{2^n}=
\begin{cases}
2 &\text{ if $n$ even}\\
6 &\text{ if $n$ odd}\\
\end{cases}.$
\end{conj}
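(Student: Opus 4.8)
The plan is to upgrade the inductive $2$-adic argument used for Theorem~\ref{powerof2} into an \emph{exact} classification of bisections, rather than a mere upper bound. First I would complete the induction begun in that proof: the same valuation bookkeeping (using $v_2\!\left(\binom{2^n}{k}\right)+v_2(k)=n$ from~\eqref{eq:v2}, the row sums $v_2(R_k)=2^k-1$ from~\eqref{eq:Rk}, and Granville's congruences) should force \emph{every} coefficient $b_k=\binom{2^n}{k}$ with $v_2(k)\ge 2$, together with the two endpoints and the central term $b_{2^{n-1}}$, to lie in a single bin $B_1$. Granting this, the only remaining freedom is how to distribute the coefficients of the last two rows, $v_2(k)\in\{0,1\}$, and the problem collapses to the single Diophantine constraint~\eqref{eq:eqAB}, namely $A+B=R_{n-1}$, where $A$ runs over subset sums of row $n-1$ and $B$ over subset sums of row $n$.

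The heart of the proof is therefore to count the pairs $(S_1,S_2)$, with $S_1\subseteq\{\,k:v_2(k)=1\,\}$ and $S_2\subseteq\{\,k:v_2(k)=0\,\}$, for which $\sum_{k\in S_1}b_k+\sum_{k\in S_2}b_k=R_{n-1}$. Here the $2$-adic structure is decisive: by~\eqref{eq:v2} every element of row $n-1$ has $v_2=n-1$ and every element of row $n$ has $v_2=n$, so writing $A=2^{n-1}\alpha$, $B=2^{n}\beta$ and $R_{n-1}=2^{n-1}\rho$ (with $\rho$ even for $n\ge 3$, since $v_2(R_{n-1})=2^{n-1}-1$), the constraint becomes $\alpha+2\beta=\rho$, which already forces $\alpha$ even, i.e. $|S_1|$ even. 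I would then iterate this reduction modulo higher powers of $2$, exactly as in the $\bmod\,2^4$ and $\bmod\,2^5$ steps in the proof of Theorem~\ref{powerof2}, to whittle the admissible configurations down to a finite, explicitly describable family, and organize that family using the palindromic symmetry $b_k=b_{2^n-k}$, which pairs each solution with a conjugate one and supplies the overall sign. The mechanism is already transparent for $n=3$: there row $n-1=\{b_2,b_6\}=\{28,28\}$ has total $R_2=56$, which coincides with the single value $b_3=b_5=56$ of row $n$, and the three admissible choices $(S_1,S_2)\in\{(\{2,6\},\varnothing),(\varnothing,\{3\}),(\varnothing,\{5\})\}$ reproduce, after accounting for the global sign, exactly the $J_8=6$ solutions.

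The main obstacle is the passage from these \emph{necessary} $2$-adic conditions to an exact count. Because the binomials in the last two rows have comparable magnitude — indeed $R_{n-1}\approx 2^{2^n-2}$ exceeds the central binomial for large $n$ — no crude size estimate rules out further ``accidental'' subset-sum coincidences, so a complete classification appears to require either a self-similar recursion or an exact evaluation of the Freiman integral of Theorem~\ref{freiman} at $b=R_{n-1}$. The route I find most promising is an induction $n\mapsto n+2$ exploiting the merging (self-similar) structure of the valuation tableau in~\eqref{eq:visual}: a step-by-two recursion would automatically produce the period-two-in-$n$ behaviour of the conjecture, explaining why $J_{2^n}$ stabilizes at $2$ for even $n$ and at $6$ for odd $n$. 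Proving that no new solutions are created as $n$ grows — that is, that beyond the finitely many valuation-compatible configurations the constraint $A+B=R_{n-1}$ is genuinely unsatisfiable — is where the real difficulty lies.
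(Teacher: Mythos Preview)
The statement you are attempting is a \emph{conjecture} in the paper, not a theorem: the authors explicitly write that they ``are unable to show that'' and instead settle for the upper bound of Theorem~\ref{powerof2}. So there is no proof in the paper to compare against; what you have written is a proof strategy for an open problem.

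That said, your plan lines up very closely with the partial progress the paper does make. The $2$-adic induction you describe in your first paragraph is exactly the argument that opens the proof of Theorem~\ref{powerof2}; the paper carries it through the first few levels (modulo $2^4$ and $2^5$) and then asserts, without a uniform induction step, that ``this argument will inductively work up to the $(n-1)$-st row''. So the reduction to equation~\eqref{eq:eqAB} is already only heuristic in the paper, and your proposal inherits that gap: you would need to supply the general induction step, not just invoke it. Your $n=3$ worked example is correct and is a good sanity check.

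The substantive gap is the one you yourself flag at the end: once reduced to $A+B=R_{n-1}$ with $A$ a subset sum from row~$n-1$ and $B$ from row~$n$, nothing in the $2$-adic filtration rules out accidental integer coincidences among these large binomials. The congruential constraints you extract (e.g.\ $|S_1|$ even) are necessary but very far from sufficient, and the ``induction $n\mapsto n+2$ via self-similarity of the tableau'' is a hope rather than a mechanism --- the tableau~\eqref{eq:visual} records only valuations, not the actual binomial values, and it is the latter that enter the subset-sum constraint. The authors hit the same wall and retreated to bounding the Freiman integral. Your proposal is a reasonable outline of where a proof would have to go, but it does not contain a new idea beyond what the paper already tried, and the obstacle you identify is precisely the one that keeps this a conjecture.
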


\section{Some computational results and exact counts}

Using the Hamming High Performance Computer (HPC) at the Naval Postgraduate School, and a parallel computer program written in Julia, we were able to verify the computational data of {\cite{CL05,Jef91}} and obtain additional results for the number of bisections $J_n$, for $n\leq 51$ (for $n$ odd we write the number of bisections as $2^{(n+1)/2}+\cdots$ to point out how many are nontrivial), displayed in Table~\ref{table:binbisections}.

A portion of this sequence, for $n\leq 36$, appears as A200147 in the OEIS (Online Encyclopedia of Integer Sequences), as the number $x_n$, $n\ge 1$, of $0$ or $1$ arrays, $[a_0,a_1,...,a_{n}]$, of $n+1$ elements, with zero $n$-difference.
In general, given a sequence $\{a_n\}_{n\ge 1}$ of real or complex numbers, the {\em first difference sequence} $\Delta( a_n)$ is defined as
$\Delta(a_n)=a_{n+1}-a_n$ for all $n\ge 1$. If we just have a list $L=[a_0,a_1,...,a_{n}]$, then the {\em first difference of $L$}, $\Delta(L)$,
is simply the list $\Delta(L)=[a_1-a_0,a_2-a_1,...,a_n-a_{n-1}]$ which has only $n$-items.
The second difference $\Delta^2(a_n)$ is defined as $ \Delta ^{2}(a_{n})=\Delta (a_{n+1})-\Delta (a_{n})$, and we have similar definitions for lists.
To establish the correspondence between the two countings, let us observe that
\begin{equation}
\label{eq2}
\Delta ^{k}(a_{n})=\Delta ^{k-1}(a_{n+1})-\Delta ^{k-1}(a_{n})=\sum _{t=0}^{k}{\binom {k}{t}}(-1)^{t}a_{n+k-t}.
\end{equation}


\begin{center}
\begin{table} [ht]
\caption{Number of Binomial Coefficients Bisections}{}
\allowdisplaybreaks[4]
\centering
 \begin{tabular}{||c c | c c | c c||}
 \hline
 $n$ & $J_n$ & $n$ & $J_n$ & $n$ & $J_n$ \\ [0.5ex]
 \hline\hline
 1 & 2 & 18 & 2 & 35 &  $2^{18}+24$ \\
 \hline
 2 & 2 & 19 & $2^{10}$  & 36 & 2 \\
 \hline
 3 & $2^2$ & 20 & 6 & 37 & $2^{19}$ \\
 \hline
 4 & 2 & 21 & $2^{11}$  & 38 & 38 \\
 \hline
 5 & $2^3$ & 22 & 2 & 39 &  $2^{20}$ \\
 \hline
 6 & 2 & 23 & $2^{12}$  & 40 &2 \\
 \hline
 7 & $2^4$ & 24 & 50 & 41 & $2^{21}+15\cdot 2^{11}$  \\
 \hline
 8 & 6 & 25 & $2^{13}$  & 42 & 2 \\
 \hline
 9 & $2^{5}$  & 26 & 6 & 43 & $2^{22}$  \\
 \hline
 10 & 2 & 27 & $2^{14}$  & 44 & 134 \\
 \hline
 11 & $2^{6}$  & 28 & 2 & 45 & $2^{23}$  \\
 \hline
 12 & 2 & 29 & $2^{15}+2^{11}$  & 46 & 2 \\
 \hline
 13 &  $2^{7}+2^4$ & 30 & 2 & 47 &  $2^{24}+2^{20}$ \\
 \hline
 14 & 14 & 31 &  $2^{16}+5\cdot 2^7$ & 48 & 4098 \\
 \hline
 15 &  $2^{8}$ & 32 & 6 & 49 &  $2^{25}$ \\
 \hline
 16 & 2 & 33 &  $2^{17}+2^{14}$ & 50 & 6 \\
 \hline
 17 &  $2^{9}$ & 34 & 130 & 51 &  $2^{26}$\\  [1ex]
 \hline
\end{tabular}
\label{table:binbisections}
\end{table}
\end{center}



From what we have seen, if $n=8$, $L:=[1,-1,-1,1,1,-1,-1,-1,1]$ is a nontrivial solution for (BCB) problem.
By (\ref{eq2}), the list $$L=[1,1,-1,-1,1,1,-1,1,1]$$ is a solution of $\Delta ^{8}(L)=[0]$ (by alternating signs).
Adding a constant to a sequence does not change its differences $\Delta ^{k}$, and multiplying a sequence by a number, it  is just a multiplicative
factor for all the differences. Hence, the list
\begin{equation}\label{eq3}
\widetilde{L}=(L+1)/2=[1,1,0,0,1,1,0,1,1]
\end{equation}
 is a  $0$ or $1$ array of $9$ elements with
a zero $8$-difference:  $$\Delta(\widetilde{L})= [0, -1, 0, 1, 0, -1, 1, 0], \ \Delta^2(\widetilde{L})= [-1, 1, 1, -1, -1, 2, -1],$$
$\Delta^3(\widetilde{L})= [2, 0, -2, 0, 3, -3]$, $\Delta^4(\widetilde{L})= [-2, -2, 2, 3, -6]$, $\Delta^5(\widetilde{L})=[0, 4, 1, -9]$, $\Delta^6(\widetilde{L})=[4, -3, -10]$,
$\Delta^7(\widetilde{L})=[-7, -7]$, and finally $\Delta^8(\widetilde{L})=[0]$.

The formulas  \eqref{eq2} and \eqref{eq3} give essentially the bijection between the set of solutions of (BCB) problem and the arrays
described in the sequence A200147.
Let us record this observation and fill in the details.
\begin{prop} The number of bisections of the binomial coefficients, $J_n$, is the same as the number of $0$'s or $1$'s arrays, of $n+1$ elements, with zero $n$-difference, i.e.,
$J_n=x_n$.
\end{prop}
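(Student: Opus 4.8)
The plan is to exhibit an explicit sign-flipping bijection between the solution set $\cJ_n$ of the (BCB) problem and the collection of $0/1$ arrays of length $n+1$ with vanishing $n$-th difference, formalizing the transformation already illustrated for $n=8$ in the excerpt. First I would pin down the closed form of the $n$-th difference of a list $L=[a_0,\ldots,a_n]$. Specializing the identity~\eqref{eq2} to $k=n$ and reindexing by $j=n-t$ yields
\[
\Delta^n(L)=\sum_{t=0}^n\binom{n}{t}(-1)^t a_{n-t}=(-1)^n\sum_{j=0}^n(-1)^j\binom{n}{j}a_j,
\]
so the condition $\Delta^n(L)=0$ is equivalent to $\sum_{j=0}^n(-1)^j\binom{n}{j}a_j=0$.

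Next I would define the two maps and check they are mutually inverse. Given a (BCB) solution $[\delta_0,\ldots,\delta_n]$ with $\delta_i\in\{-1,1\}$, set $a_i=\tfrac{(-1)^i\delta_i+1}{2}$; since $(-1)^i\delta_i\in\{-1,1\}$, this produces a genuine $0/1$ array. Conversely, given a $0/1$ array $[a_0,\ldots,a_n]$, set $\delta_i=(-1)^i(2a_i-1)\in\{-1,1\}$. These assignments are visibly inverse to one another, so it remains only to match the two defining conditions. The key computation is
\[
\sum_{i=0}^n\delta_i\binom{n}{i}
=\sum_{i=0}^n(-1)^i(2a_i-1)\binom{n}{i}
=2\sum_{i=0}^n(-1)^i a_i\binom{n}{i}-\sum_{i=0}^n(-1)^i\binom{n}{i},
\]
and the final sum equals $(1-1)^n=0$ for $n\ge 1$ by the binomial theorem. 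Hence $\sum_i\delta_i\binom{n}{i}=0$ if and only if $\sum_i(-1)^i a_i\binom{n}{i}=0$, which by the first step is exactly the statement $\Delta^n([a_0,\ldots,a_n])=0$.

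Combining these observations, the map $\delta\mapsto a$ restricts to a bijection between $\cJ_n$ and the set of $0/1$ arrays of $n+1$ elements with zero $n$-difference, giving $J_n=x_n$. There is no genuine obstacle here; the only point demanding care is the alternating twist $\delta_i\mapsto(-1)^i\delta_i$, which is precisely what converts the signed sum $\sum_i\delta_i\binom{n}{i}$ into the alternating sum detected by the $n$-th difference. The vanishing of $\sum_i(-1)^i\binom{n}{i}$ is what allows the additive constant introduced by the affine change $a_i=\tfrac{\pm1+1}{2}$ to drop out, so the two conditions coincide on the nose.
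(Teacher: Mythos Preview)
Your proof is correct and follows exactly the same route as the paper: the map $\delta_i\mapsto a_i=\tfrac{(-1)^i\delta_i+1}{2}$ is precisely the paper's $L\mapsto\widetilde{L}=\tfrac{1}{2}(\widehat{L}+1)$, and your verification that the (BCB) condition matches $\Delta^n=0$ is the computation the paper leaves implicit. If anything, your write-up is more complete, since you explicitly exhibit the inverse map and check that the additive constant drops out via $(1-1)^n=0$.
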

\begin{proof} Suppose we have a  solution $L:=[\delta_0,\ldots,\delta_n]$ of the (BCB) problem. Hence,
 $\displaystyle \sum_{i=0}^n \delta_i\binom{n}{i}=0$, $\delta_i\in\{-1,1\}$. From (\ref{eq2}), we see that this is equivalent to
$\Delta ^{n}(\widehat{L})=0$ where $\widehat{L}=[\delta_0,-\delta_1,\ldots,(-1)^n \delta_n]$. As we have observed in the Introduction, adding
a constant to  $\widehat{L}$, does not affect the differences $\Delta^k$, i.e., we still have  $\Delta ^{n}(\widehat{L}+1)=0$. Finally, since
$$\widehat{L}+1=[1+\delta_0,1-\delta_1,\ldots,1+(-1)^n \delta_n]$$ is a list of $2$'s or $0$'s we can divide by $2$ to obtain an array of
$0$'s or $1$'s:   $\widetilde{L}=[(1+\delta_0)/2,(1-\delta_1)/2,\ldots,(1+(-1)^n \delta_n)/2]$ for which we still have $\Delta ^{n}(\widetilde{L})=\Delta ^{n}(\widehat{L})/2=0$.
It is clear that the map $$L\to \widetilde{L}=\frac{1}{2}(\widehat{L}+1)$$ establishes a bijection between the sets in discussion.
\end{proof}

We say that $f$ is $SAC$~\cite{WT85} if complementing any one of the $n$ input bits the output changes with probability exactly one half.   A Boolean function of $n$ variables satisfies the $SAC$ of order $k$ (we say $f$ is $SAC(k)$ -- see~\cite{Fo88}),  $0\leq k\leq n-2$, if whenever $k$ input bits are fixed, the resulting
function of $n-k$ variables satisfies the $SAC$.

In what follows we will show that $J_n=2$ for infinitely many values of $n$, which will imply  conjecture $Q2$ of Cusick and Li~\textup{\cite{CL05}}, hence $Q4$, as well,    and so, there are only four symmetric $SAC(k)$ functions for infinitely many~$n$.

  First, we let $v_2(n)$ be the 2-adic valuation of $n$, that is, the largest power of 2 occurring in the prime
  power factorization of $n$ (we write $2^{v_2(n)}||n$) (we slightly abuse the notation, as it is usually customary to define the $2$-adic valuation as $2^{-v_2(n)}$).

\begin{thm}
\label{primeminusone}
If $p$ is a prime number, then  $J_{p-1}=2$.
 \end{thm}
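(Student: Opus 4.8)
The plan is to reduce the defining equation of a (BCB) solution modulo $p$ and exploit the classical congruence $\binom{p-1}{i}\equiv(-1)^i\pmod p$, which collapses every binomial coefficient to $\pm 1$ and thereby forces any solution to be one of just two rigid alternating patterns.

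First I would record the congruence. Writing $\binom{p-1}{i}=\frac{(p-1)(p-2)\cdots(p-i)}{i!}$ and reducing each numerator factor modulo $p$ via $p-j\equiv -j\pmod p$ gives $\binom{p-1}{i}\equiv\frac{(-1)^i\,i!}{i!}=(-1)^i\pmod p$ for $0\le i\le p-1$; this is also immediate from Lucas' theorem. Now let $[\delta_0,\ldots,\delta_{p-1}]$ be any solution, so $\sum_{i=0}^{p-1}\delta_i\binom{p-1}{i}=0$ with $\delta_i\in\{-1,1\}$. Reducing this integer identity modulo $p$ and inserting the congruence yields $\sum_{i=0}^{p-1}\delta_i(-1)^i\equiv 0\pmod p$. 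Setting $\epsilon_i:=(-1)^i\delta_i\in\{-1,1\}$, this reads $\sum_{i=0}^{p-1}\epsilon_i\equiv 0\pmod p$. If $a$ of the $\epsilon_i$ equal $+1$ and the remaining $p-a$ equal $-1$, the sum is $2a-p$, so the congruence becomes $2a\equiv 0\pmod p$; since $p$ is odd this forces $a\equiv 0\pmod p$, and as $0\le a\le p$ we conclude $a\in\{0,p\}$. Hence either all $\epsilon_i=1$ or all $\epsilon_i=-1$, i.e. $\delta_i=(-1)^i$ for every $i$, or $\delta_i=(-1)^{i+1}$ for every $i$.

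Finally I would confirm that these two surviving candidates are genuine solutions: for $\delta_i=(-1)^i$ the binomial theorem gives $\sum_{i=0}^{p-1}(-1)^i\binom{p-1}{i}=(1-1)^{p-1}=0$, and the second candidate is simply its negation. These are precisely the two trivial solutions $\pm[1,-1,1,-1,\ldots]$ (recall $n=p-1$ is even, so the odd-$n$ abundance of trivial solutions does not occur here), whence $J_{p-1}=2$. The degenerate case $p=2$, where $n=1$, is checked directly: $\binom{1}{0}=\binom{1}{1}=1$ forces $\delta_0=-\delta_1$, so $J_1=2$. I do not expect a serious obstacle; the one point demanding care is logical rather than computational, namely that the mod-$p$ reduction by itself only narrows the field to two candidates, and one must separately verify, via the binomial theorem, that both candidates actually satisfy the original integer equation.
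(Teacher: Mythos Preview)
Your proof is correct and follows essentially the same approach as the paper: both reduce the (BCB) equation modulo $p$ via $\binom{p-1}{i}\equiv(-1)^i\pmod p$, obtain that the $\pm 1$ sum $\sum (-1)^i\delta_i$ (of $p$ terms, hence in $[-p,p]$) is divisible by $p$, and conclude it equals $\pm p$, forcing the alternating pattern. Your parametrization via $a$ with $2a\equiv 0\pmod p$ is a cosmetic variant of the paper's observation that the sum is odd (hence nonzero) and bounded by $p$ in absolute value; your explicit verification that the two surviving candidates are genuine solutions is a nice touch, though the paper has already recorded $J_n\ge 2$ in the introduction.
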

\begin{proof}
 The statement is obviously true if $p=2$,  so we may assume that $p$ is an odd prime.
We let $n=p-1$ and observe that  $n\equiv -1$ (mod $p$). We want to show that $\binom{n}{j}\equiv (-1)^j$ (mod $p$), for every $j\in \{0,1,\ldots,n\}$.
This is clearly true for $j=0$.
Since, every $j\in \{1,\ldots,n\}$ has an inverse modulo $p$, we have for $j\in \{1,\ldots,n\}$
\begin{align*}
\binom{n}{j} &\equiv\frac{n(n-1)\cdots (n-j+1)}{j!}\\
&\equiv \frac{(-1)(-2)\cdots (-1-j+1)}{j!}\equiv (-1)^j\ \pmod p.
\end{align*}

Hence, if  $ [\delta_0,\ldots,\delta_n]$ a solution of the (BCB) problem
$$0=\sum_{j=0}^n \delta_j \binom{n}{j}\equiv \sum_{j=0}^n (-1)^j \delta_j \ \pmod p.$$
But the number $$\Delta:=\sum_{j=0}^n (-1)^j \delta_j\equiv 0 \ \pmod p$$ is an odd number ($n+1=p$ is an odd prime) satisfying

\begin{equation}\label{eq4}
|\Delta|\le \sum_{j=0}^n  |(-1)^j \delta_j|=\sum_{j=0}^n 1=n+1=p.
\end{equation}

 \noindent  Because $\Delta$ cannot be zero, the only possible values of $\Delta$ are $p$ or $-p$.
Then the equality $|\Delta|=p= n+1$ in \eqref{eq4},  forces $\delta_j=\pm (-1)^j$, for all $j$. Therefore, we have only the two trivial solutions, that is, $J_n=2$.
\end{proof}

  Next, we are going to use the following construction of a transformation on solutions  of the (BCB), denoted here by $\Theta$,  which we are going to call {\it backward map}. Let $n\in \mathbb N$ with $n\ge 2$ and $\delta=[\delta_0,\ldots,\delta_n]$ be a solution of the (BCB) problem. Hence $\displaystyle \sum_{i=0}^n \delta_i\binom{n}{i}=0$ with $\delta_i\in\{-1,1\}$. Using the Pascal binomial identity
$$\binom{n}{k}=\binom{n-1}{k}+\binom{n-1}{k-1},\ \ 1\le k\le n-1, $$
\noindent  we have $\displaystyle \delta_0+\delta_n+\sum_{i=1}^{n-1} \delta_{i}\left(\binom{n-1}{i}+\binom{n-1}{i-1}\right)=0$.  Rearranging terms, we obtain
$$\delta_0+\delta_1+\sum_{i=1}^{n-1} (\delta_{i}+\delta_{i+1})\binom{n-1}{i}=0.$$
If we define $\eta_j=(\delta_j+\delta_{j+1})/2$, $j\in \{0,1,...,n-1\}$, the identity above becomes $\sum_{i=0}^{n-1} \eta_i \binom{n-1}{i}=0$.
Let us denote the map $[\delta_0,\ldots,\delta_n]\to [\eta_0,\eta_1,\ldots,\eta_{n-1}]$ by $\Theta$. If we restrict the domain of this map to solutions for which
$\delta_0=1$ then it becomes a one-to-one map. We observe that $\eta_j\in \{-1,0,1\}$ for all $j$. So, if we have a trivial solution $\delta$ we get $\Theta(\delta)=0$.
Given a sequence $\eta=[\eta_0,\eta_1,\ldots,\eta_{n-1}]=\Theta(\delta)$ for some $\delta$, we see that $\eta_j=1$ forces $\delta_j=1$ and $\delta_{j+1}=1$. Similarly, if
 $\eta_j=-1$, forces $\delta_j=-1$ and $\delta_{j+1}=-1$. Hence we cannot have two consecutive $\eta$'s having a change of signs, i.e., it must go through a zero value.
In fact, the number of zero's between two changes of sign should be odd.  Let us call this property the {\it (IVP) property} since it resembles the Intermediate Value Property in Calculus. It is easy to see that a sequence like that is then in the range of $\Theta$. Having a non trivial solution $\delta' \in {\cal J}_{n-1}$, this leads to two identities $\eta_1$ and $\eta_2$. If one of these vectors has the (IVP) we say that $\delta'$ has the (IVP).
Let us observe that identities like $\binom{n}{k}-\binom{n}{n-k}=0$ have (IVP), if and only if $n$ is even.

\begin{corollary}
\label{primeminustwo}
If $p$ is an odd prime, then  ${\cal J}_{p-2}$ cannot contain nontrivial solutions which have the (IVP)  property.
 \end{corollary}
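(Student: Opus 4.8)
The plan is to read the corollary as an immediate consequence of Theorem~\ref{primeminusone} through the backward map $\Theta$, using crucially that $p-1$ is even while $p-2$ is odd. First I would isolate the two properties of $\Theta$ that were established above: it is onto the IVP-identities (every $\{-1,0,1\}$-valued sequence at level $p-2$ that has the (IVP) property equals $\Theta(\delta)$ for some $\delta\in{\cal J}_{p-1}$), and it carries every trivial solution to the zero sequence. By Theorem~\ref{primeminusone} we have $J_{p-1}=2$, so ${\cal J}_{p-1}$ consists of exactly the two trivial solutions, both of which $\Theta$ sends to $0$. Hence $\Theta({\cal J}_{p-1})=\{0\}$, and combined with surjectivity this shows that the only IVP-identity at level $p-2$ is the zero sequence.

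Next I would make explicit the two identities attached to a solution. Write $\tau=[1,-1,1,\dots,-1]\in{\cal J}_{p-2}$ for the alternating trivial solution (here $p-2$ is odd, so $\tau$ ends in $-1$), and for a prospective nontrivial $\delta'\in{\cal J}_{p-2}$ put
\[
\eta_1=\frac{\delta'+\tau}{2},\qquad \eta_2=\frac{\delta'-\tau}{2}.
\]
Because $\delta'_i,\tau_i\in\{-1,1\}$, each $\eta_k$ is a $\{-1,0,1\}$-valued sequence, and since $\delta'$ and $\tau$ are both solutions at level $p-2$ one gets $\sum_i(\eta_k)_i\binom{p-2}{i}=\frac12\bigl(\sum_i\delta'_i\binom{p-2}{i}\pm\sum_i\tau_i\binom{p-2}{i}\bigr)=0$. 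Thus $\eta_1,\eta_2$ are precisely the two identities that $\delta'$ gives rise to, they satisfy $\eta_1+\eta_2=\delta'$, and by definition $\delta'$ has the (IVP) property exactly when one of them does.

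To finish, suppose for contradiction that $\delta'$ is nontrivial and has the (IVP) property; say $\eta_2$ has it (the case of $\eta_1$ is symmetric). By the first paragraph the only IVP-identity at level $p-2$ is $0$, so $\eta_2=0$, whence $\delta'=\tau$; this is a trivial solution, contradicting the nontriviality of $\delta'$. Therefore ${\cal J}_{p-2}$ contains no nontrivial solution with the (IVP) property.

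The step I expect to be the crux is the surjectivity-plus-counting argument of the first paragraph: one must be certain that the $\pm1$-vector reconstructed from an IVP-identity at level $p-2$ genuinely satisfies the level-$(p-1)$ bisection equation, i.e.\ lies in ${\cal J}_{p-1}$, since only then may $J_{p-1}=2$ be invoked to collapse that identity to $0$. The remaining parts---the decomposition $\delta'=\eta_1+\eta_2$ and the parity bookkeeping for $p-1$ and $p-2$---are routine.
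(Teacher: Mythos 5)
Your proposal is correct and takes essentially the same route as the paper's proof: lift the (IVP) identity produced by a hypothetical nontrivial $\delta'\in{\cal J}_{p-2}$ through $\Theta$ to a solution in ${\cal J}_{p-1}$, invoke Theorem~\ref{primeminusone} to force that solution to be trivial (hence its $\Theta$-image is zero), and derive a contradiction. Your explicit decomposition $\eta_{1,2}=(\delta'\pm\tau)/2$ and the check that the lift genuinely lands in ${\cal J}_{p-1}$ simply spell out details the paper leaves implicit.
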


\begin{proof} If by way of contradiction, we have a nontrivial solution $\delta'$ which has (IVP), then it leads to a nonzero identity $\eta$ which can be lifted up to
$\delta\in {\cal J}_{p-1}$, i.e. $\eta=\Theta(\delta)$. But we have shown that the only solutions in  ${\cal J}_{p-1}$ are the trivial ones. Hence, $\eta=\Theta(\delta)=0$ and so we get into a contradiction.
\end{proof}

This suggests that the only solutions that we can have in ${\cal J}_{p-2}$ are the ones that lead to trivial identities of the form $\binom{n}{k}-\binom{n}{n-k}=0$ or sums of these (which cannot be lifted since
$p-2$ is odd). This explains why, numerically, $J_{p-2}=2^{\frac{p-1}{2}}$ for many primes $p$.

The Julia program we use represents bisection solutions it finds as binary vectors, $\overrightarrow{v_n}$ (see the appendix).  Given the $n^{th}$ row of Pascal's triangle, $\overrightarrow{p_n}$, along with a corresponding bisection, we represent the dot product as: $\overrightarrow{p_n}\cdot \overrightarrow{v_n} = 2^{n - 1}.$ By inspecting the nontrivial solution vectors we observe the fact that the pattern 10011001 occurs in the nontrivial bisection for $n=13$ and so, prompted by that, we search for other cases where we can insert 1001 at position $n-k$ in the first half, as well as in the corresponding position in the second half.\\

Looking at the bisection solution data (see the appendix) we see some other patterns showing up. We will first consider some identities that were pointed out by Jefferies~\cite{Jef91}, and find the complete solutions set for the implied diophantine equations, rendering, yet again other infinite classes of integers admitting nontrivial bisections.
\begin{thm}
\label{thm:nontr}
We have:
\begin{enumerate}
\item If $n=k^2-2$, $k\geq 4$ even, then $J_n\geq 10$, $J_{n-1}\geq 2^{\frac{n+1}{2}}+2^{\frac{n+1}{2}-3}$ $($tight$)$.
\item If $k\equiv 0,1\pmod 3$ and  $n=\frac{F_{4k+1}+2F_{4k}-6}{5}$, then $J_n\geq 2^{\frac{n+1}{2}}+ 2^{\frac{n-3}{2}}$.
\item Let $n=4k^2 + 16k+13, k\geq 0$. Then, there are at least $2^{(n+1)/2-3}$ nontrivial bisections for the binomial coefficients $\left\{\binom{n}{j}\right\}_{0\leq j\leq n}$, and so, $J_n\geq 2^{\frac{n+1}{2}}+2^{\frac{n-1}{2}}$.
    \end{enumerate}
\end{thm}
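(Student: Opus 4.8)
The unifying idea is that every nontrivial bisection here arises by \emph{splicing} a balanced binomial identity into the alternating (trivial) solution $\delta_i=(-1)^i$. Writing $\delta_i=(-1)^i\sigma_i$ with $\sigma_i\in\{-1,1\}$, a sign vector solves (BCB) exactly when $\sum_i\sigma_i(-1)^i\binom{n}{i}=0$; since the baseline $\sigma\equiv 1$ already vanishes, flipping $\sigma$ on a set $S$ gives a solution iff $\sum_{i\in S}(-1)^i\binom{n}{i}=0$. Thus any \emph{balanced} identity of the shape $\binom{n}{i_1}+\cdots=\binom{n}{j_1}+\cdots$, confined to a short window, can be installed on that window and on its mirror under $i\mapsto n-i$, while the remaining coordinates keep the free antisymmetric choices of a trivial solution. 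The number of nontrivial solutions so obtained is $2^{\,f}$, where $f$ counts the untouched half-coordinates plus the internal sign freedom of the gadget; a gadget occupying a width-$4$ window pins $4$ of the $\tfrac{n+1}{2}$ first-half coordinates and contributes one extra admissible sign bit, giving $f=\tfrac{n+1}{2}-3$.

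For item (3) and for the $J_{n-1}$ statement of item (1) I would use the four-term identity
\begin{equation*}
\binom{N}{j}+\binom{N}{j+3}=\binom{N}{j+1}+\binom{N}{j+2}.
\end{equation*}
Dividing by $\binom{N}{j}$ and clearing denominators using $\binom{N}{j+1}/\binom{N}{j}=\tfrac{N-j}{j+1}$, one reduces this to the quadratic $4j^2-4(N-3)j+(N-6)(N-1)=0$, whose discriminant equals $16(N+3)$. Hence the identity admits an integer index $j=\tfrac{(N-3)\pm\sqrt{N+3}}{2}$ \emph{iff} $N+3$ is a perfect square, and then (for $N+3\ge 16$) the smaller root satisfies $0\le j$ and $j+3\le\tfrac{N-1}{2}$, so the window fits in the first half. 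Writing $N+3=K^2$ gives $N=K^2-3$; taking $K=2k+4$ even recovers $N=n=4k^2+16k+13$ for item (3), while $K=k$ even (the odd value $N=n-1=k^2-3$) gives item (1). Splicing this width-$4$ gadget and its mirror produces $2^{(N+1)/2-3}$ genuinely nontrivial solutions, the stated count ($N=n$ in (3), $N=n-1$ in (1)). For the even companion $n=k^2-2$ I would either exhibit a handful of explicit gadgets, or lift solutions of dimension $n-1=k^2-3$ through the Pascal relation $\binom{n}{i}=\binom{n-1}{i-1}+\binom{n-1}{i}$ (the inverse of the backward map $\Theta$, available exactly for solutions having the (IVP) property), a constant number of which survive to yield $J_n\ge 10$.

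For item (2) the same strategy applies, but the gadget's solvability is governed by a Pell equation. Here I would seek a shorter (three-term) balanced relation; reducing its index equation to lowest terms produces a quadratic in $j$ whose discriminant is a quadratic in $n$ of the form $5t^2\pm 4$, so that the identity is realizable iff the Pell equation $x^2-5y^2=\pm 4$ is solvable. Its solutions are the consecutive Lucas/Fibonacci pairs, and tracking the index $4k$ turns the admissibility condition into $n=\tfrac{F_{4k+1}+2F_{4k}-6}{5}$; the congruence $k\equiv 0,1\pmod 3$ is precisely what forces $5\mid F_{4k+1}+2F_{4k}-6$, i.e.\ what makes $n$ an integer. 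A three-term gadget pins only three first-half coordinates, so splicing now leaves $f=\tfrac{n+1}{2}-2$ free bits, giving the $2^{(n-3)/2}$ nontrivial solutions claimed.

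The main obstacle is twofold. First, in item (2), completely resolving the Pell equation and verifying that its Fibonacci parametrization matches the printed closed form (including the exact residue condition on $k$) is the delicate arithmetic step; the analogous reduction in (1) and (3) is painless because the discriminant collapses to $N+3$. Second, the word \emph{tight} in item (1) is a genuine upper-bound assertion: I must show no further solutions exist, for which I would feed the family $n-1=k^2-3$ into the $2$-adic valuation bookkeeping and the analytic estimate of Theorem~\ref{newthm-nodd} to certify that the spliced solutions together with the trivial ones exhaust $\cJ_{n-1}$. Throughout I must also check that the spliced vectors are pairwise distinct and are not secretly antisymmetric (hence truly nontrivial); this is exactly the bookkeeping that, among the gadget's admissible sign patterns, precisely half break the $i\mapsto n-i$ symmetry, which is what converts the naive count into the factor $2^{\,\cdot-3}$ (resp.\ $2^{\,\cdot-2}$).
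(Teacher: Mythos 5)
Your overall strategy --- splicing balanced binomial identities into trivial (antisymmetric) solutions and counting the surviving sign freedom --- is exactly the paper's, and your handling of item (3) and of the $J_{n-1}$ half of item (1) matches it in substance: the paper's ``1001'' insertion reduces to the very same four-term identity $\binom{N}{j}+\binom{N}{j+3}=\binom{N}{j+1}+\binom{N}{j+2}$, i.e.\ to the quadratic $4j^2-4(N-3)j+(N-1)(N-6)=0$ with discriminant $16(N+3)$, and to the same count of roughly $2^{(N+1)/2-3}$ nontrivial solutions. But there are two genuine gaps. First, the bound $J_n\geq 10$ in item (1) is left essentially unproved. The paper obtains it from the \emph{three}-term identity $\binom{n}{x}+\binom{n}{x+2}=2\binom{n}{x+1}$, which rewrites as $(n-2x-2)^2=n+2$ and is therefore solvable exactly when $n=k^2-2$, with $x=\frac{k^2\mp k}{2}-2$; for even $n$ the indices $x$ and $x+2$ lie in one parity class and $x+1$ in the other, so exchanging the two sides of the identity between the two alternating bins yields eight nontrivial bisections (four for each choice of the $\mp$ sign), whence $J_n\geq 2+8=10$. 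Your two substitutes do not accomplish this: ``exhibiting a handful of explicit gadgets'' is not an argument uniform in $k$, and ``lifting'' solutions from dimension $n-1$ through Pascal's rule inverts the backward map $\Theta$, which is only available for solutions with the (IVP) property and comes with no count of how many lifts are themselves bisections; the paper never argues this way.

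Second, in item (2) your explanation of the congruence $k\equiv 0,1\pmod 3$ is wrong. Since $F_{4k+1}+2F_{4k}=F_{4k+2}+F_{4k}=L_{4k+1}$ and the Lucas numbers are periodic modulo $5$ with period $4$, one has $L_{4k+1}\equiv 1\equiv 6\pmod 5$ for \emph{every} $k$, so $n=\frac{F_{4k+1}+2F_{4k}-6}{5}$ is always an integer; what integrality actually forces is the earlier restriction of the Pell solutions $(F_{2m+1}+2F_{2m},\,F_{2m+1})$ of $X^2-5Y^2=-4$ to even index $m=2k$. The true role of $k\equiv 0,1\pmod 3$ is to make $n$ \emph{odd}: it is equivalent to $F_{4k+1}$ being odd (i.e.\ $3\nmid 4k+1$), and oddness is indispensable for the counting step, since only for odd $n$ are there $2^{(n+1)/2}$ antisymmetric trivial solutions to perturb, so that fixing the symmetric gadget $\binom{n}{x}+\binom{n}{x+1}=\binom{n}{x+2}$ at $x$ and at its mirror leaves $2\cdot 2^{(n+1)/2-3}=2^{(n-3)/2}$ nontrivial solutions. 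A smaller discrepancy: you read ``tight'' in item (1) as a theorem-level upper bound and propose to certify it with the $2$-adic bookkeeping and Theorem~\ref{newthm-nodd}; those estimates are of order $2^n/n$, hopelessly far above the $2^{n/2}$ scale required, and the paper claims tightness only by comparison with the computed value $J_{13}=2^{7}+2^{4}$ from its table.
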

\begin{proof}
We first consider the identity
\begin{equation}
\label{eq:1st-id}
\binom{n}{x}+\binom{n}{x+2}=2\binom{n}{x+1}.
\end{equation}
By expanding and canceling out the factorials, we obtain the diophantine equation (assume that $n>1$)
\begin{align*}
 n^2 -4nx  + 4 x^2 - 5n  + 8 x+2=0.
\end{align*}
We will take an elementary approach to this equation, and write it as
\begin{align*}
(n-2x)^2-4(n-2x)-n+2= (n-2x-2)^2-n-2=0,
\end{align*}
that is, $n-2x-2=\pm k$ and $n+2=k^2$, $k\in\mathbb{Z}$, and so, we get the integer solutions for~\eqref{eq:1st-id}
\begin{align*}
n=k^2-2, \quad x=\frac{k^2\mp k}{2}-2.
\end{align*}
Note that Jefferies~\cite{Jef91} provides only the even solutions.

Now, we must argue whether these identities will generate nontrivial bisections. As we mentioned previously, the way we use these identities is to transform a trivial bisection into nontrivial ones by interchanging the two sides of the identity, assuming each side occurs in the same bisection. If $n$ odd, recall that a trivial bisection is obtained by taking randomly the first half of the coefficient $\{0,1\}$-vector, and the second half is the complement. However, in our case, these binomials occur in the first half, so this identity will not give us nontrivial bisections. If $n$ is even, we get the two trivial bisections by putting all even indexed binomials in one bin, and all the odd indexed ones in the other bin. Since $x\equiv x+2\not\equiv x+1\equiv n-(x+1)\pmod 2$, then this identity will give us eight more (four such for each choice of the $\mp$ sign) nontrivial bisections (see also~\cite{Jef91}).

We now look at the binomial identity, which while observed in~\cite{Jef91} for $n=13,x=3$, or $x=7$, was not solved there in its full generality:
\begin{equation}
\label{eq:2nd-id}
\binom{n}{x}+\binom{n}{x+3}=\binom{n}{x+1}+\binom{n}{x+2}.
\end{equation}
Equation~\eqref{eq:2nd-id} is equivalent to
\[
  n^2 + 4 x^2 -4nx -7n +12x+6=0.
\]
It turns out that it is as easy as the previous diophantine equation and a similar elementary approach renders the solutions
\begin{align*}
n=k^2-3, \quad x=\frac{k^2\mp k}{2}-3.
\end{align*}
In the case of odd $n$, the situation is different. The idea is to transform a trivial bisection (whose second half $\{0,1\}$-vector is the complement of the arbitrarily chosen first half) by keeping a small vector fixed in the first half (and the  second half), which we show that has equal sum.
For the previous values of $n,x$, we obtain $2\cdot 2^{\frac{n+1}{2}-4}$ many nontrivial bisections (a tight bound as we see from our table, since $J_{13}=2^{\frac{13+1}{2}}+2^4$).

Next, we consider the binomial equation
\begin{equation}
\label{eq:3rd-id}
\binom{n}{x+2}=\binom{n}{x+1}+\binom{n}{x}.
\end{equation}
We point out that the single solution $(n,x)=(117,38)$ provided in~\cite{Jef91} is incorrect, and it should rather be $(n,x)=(103,38)$. In fact, we shall find all solutions to this diophantine equation, although, the method is slightly more complicated than the previous diophantine equations. We do not claim that this equation has not been considered before, but we were not able to find a suitable reference.

From~\eqref{eq:3rd-id} we obtain
\[
 n^2 + x^2 - 3 n x -3n-2=0,
\]
which can be written (multiplying by $20$ so that we have an  equation in integers) as the Pell equation
(for convenience, we take $x\leq n/2$, so $3n>2x$)
\begin{equation}
\label{eq:3rd-id2}
(5n+6)^2-5(3n-2x)^2=-4.
\end{equation}

 Our reason for purposefully disregarding a (well-known to specialists) recurrence identity (namely, $L_n^2-5F_n^2=4 (-1)^n$, where $F_n,L_n$ are the Fibonacci, respectively Lucas numbers, satisfying the same recurrence $F_{m+1}=F_m+F_{m-1}$,  with $F_0=0, F_1=1$, $L_0=2,L_1=1$) is two-fold: it is not obvious that the mentioned identity will render {\em all} solutions to our diophantine equation; secondly, we wish to give yet another proof to that identity via Pell equations theory.

Fortunately, the Pell equation $X^2-5Y^2=-4$ can be solved precisely in a form that is convenient to us~(see~\cite{Ma00, Tek07}). First, observe that $(x_1,y_1)=(1,1)$ is its fundamental solution. Pell equation theory shows that all solutions to~$X^2-5Y^2=-4$ are then $(x_{2n+1},y_{2n+1})$, where
\[
x_{2m+1}+y_{2m+1}\sqrt{5}=\frac{1}{2^{2m}} (1+\sqrt{5})^{2m+1}= 2 \phi^{2m+1},
\]
where $\phi=\frac{1+\sqrt{5}}{2}$ is the golden mean. We now use the identity
\[
\phi^k=\phi F_k+F_{k-1},
\]
therefore,
\[
(x_{2m+1},y_{2m+1})=(F_{2m+1}+2F_{2m},F_{2m+1}),
\]
are all solutions to~$X^2-5Y^2=-4$, and so,
the following solutions for~\eqref{eq:3rd-id2}
\begin{align*}
n&=\frac{F_{2m+1}+2F_{2m}-6}{5},\\
x&=\frac{3F_{2m}-F_{2m+1}-9}{5},
\end{align*}
assuming they are integers. It is rather easy to show that $m$ must be even, say $m=2k$ and so, the general solution to~\eqref{eq:3rd-id} now becomes
\begin{align*}
n&=\frac{F_{4k+1}+2F_{4k}-6}{5},\\
x&=\frac{4F_{4k+1}+3F_{4k}-9}{5}.
\end{align*}
   We are looking for odd values of $n$, which will happen if $F_{4k+1}$ is odd. Using the entry point modulo 2 for the Fibonacci numbers, we infer that $F_{4k+1}$ is odd if $k\equiv 0,1\pmod 3$.

  Certainly, since then for such an odd $n$  we could  ``destroy'' the triviality of a bisection by placing, for $x<n/2$, the binomials $\binom{n}{x},\binom{n}{x+1}$ in one bin and $\binom{n}{x+2}$ in another bin (similarly, for $x>n/2$), we infer that there are more than $2\cdot 2^{\frac{n+1}{2}-3}$ nontrivial bisections in this case.


Next, we define the operation $\tilde \cdot$ on a $\{0,1\}$-bit block $B$, which outputs the mirror image block $\tilde {B}$. For example, $\widetilde{100}={001}$. Also, recall that $\bar{B}$ is the complement of the block $B$.

 Let $n=2t+1$ and $k$ to be determined later.
 The idea is to start with a trivial bisection $H||\bar{\tilde{H}}$ (where $H$ is a random first block) for $n$  and replace a 4-bit block $k$ bits away from the middle of the sequence) in $H$ and the corresponding 4-bit block in $\bar{\tilde{H}}$ by $1001\stackrel{k}{\overbrace{\cdots}}||\stackrel{k}{\overbrace{\cdots}}1001$ (similarly, by $0110\stackrel{k}{\overbrace{\cdots}}||\stackrel{k}{\overbrace{\cdots}}0110$) to preserve the bisection.

 Here, we force
  $n$ to satisfy the following binomial coefficient identity
\begin{align*}
&\binom{n}{t-k-3}+\binom{n}{t-k}+\binom{n}{t+k+1}+\binom{n}{t+k+4}\\
=& \binom{n}{t-k-1}+\binom{n}{t-k-2}+\binom{n}{t+k+2}+\binom{n}{t+k+3},
\end{align*}
which is equivalent to
\begin{align*}
&\binom{n}{t-k-3}+\binom{n}{t-k}= \binom{n}{t-k-1}+\binom{n}{t-k-2}.
\end{align*}
Multiplying the above equation by $\frac{(t-k-3)!(t+k+1)!}{n!}$ we obtain the equation
{\small
\begin{align*}
& \frac{1}{(t+k+2)(t+k+3)(t+k+4)}+\frac{1}{(t-k)(t-k-1)(t-k-2)}\\
=&\frac{1}{(t-k-1)(t-k-2)(t+k+2)}+ \frac{1}{(t-k-2)(t+k+2)(t+k+3)},
\end{align*}
}

\noindent
which  renders the diophantine equation
\begin{align*}
&6 + 8 k + 2 k^2 - t=0,
\end{align*}
therefore, for every value of $k\geq 0$, one can take
$n=2t+1=4k^2+16k+13$, for which there are (at least two) nontrivial bisections. The bound can be improved observing that the first $(n+1)/2-4$ bits can be taken arbitrarily.
\end{proof}

\section{Appendix}

We display below the values of $n\leq 10000$ given by Theorem~\ref{thm:nontr}, for which there are nontrivial bisections, namely,
{\small
\begin{align*}
& 13, 14, 33, 34, 61, 62, 97, 98, 103, 141, 142, 193, 194, 253, 254, 321, 322, \\
&397, 398, 481, 482, 573, 574, 673, 674, 713, 781, 782, 897, 898, 1021, 1022,\\
& 1153, 1154, 1293, 1294, 1441, 1442, 1597, 1598, 1761, 1762, 1933, 1934, \\
& 2113, 2114, 2301, 2302, 2497, 2498, 2701, 2702, 2913, 2914, 3133, 3134, \\
& 3361, 3362, 3597, 3598, 3841, 3842, 4093, 4094, 4353, 4354, 4621, 4622, \\
& 4897, 4898, 5181, 5182, 5473, 5474, 5773, 5774, 6081, 6082, 6397, 6398, \\
&6721, 6722, 7053, 7054, 7393, 7394, 7741, 7742, 8097, 8098, 8461, 8462, \\
& 8833, 8834, 9213, 9214, 9601, 9602, 9997, 9998.
\end{align*}
}

The table which follows contains the complete set of nontrivial bisection solution vectors for $1 \leq n \leq 50$.  In the interest of saving space, we only list the highest lexicographically occurring solutions.  Any additional solutions which a listed solution may yield, can be generated in the following manner:  If a pair of bits are equidistant from the center of the given vector and differ, they may both be complemented to produce a new solution.  Additionally, any solution vector can also be reversed and complemented in its entirety to produce yet another solution.

\label{Appendix}

\begin{center}
 \begin{tabular}{|| c | c | c ||}
 \hline
 $n$ & \# nontrivial sols. & nontrivial sol. vectors \\ [0.5ex]
 \hline\hline
 8 & 4 & 100110001\\
 \hline
 13 & 16 & 11110011001000\\ [1ex]
 \hline
 14 & 4 & 101001101000101\\
   & 8 & 101011100100101\\
 \hline
 20 & 4 & 101010011010100010101\\
 \hline
 24 & 32 & 1000110111011000100010001\\
  & 16 & 1011001111010100101000101 \\
 \hline
 26 & 4 & 101010100110101010001010101 \\
 \hline
 29 & 2048 & 111111110111011000110010000000\\
 \hline
 31 & 512 & 11110110011111100010101000001000\\
    & 128 & 11110110010110011001100000001000\\
 \hline
 32 & 4 & 101010101001101010101000101010101\\
 \hline
 33 & 16384 & 1111111111111001101001000000000000\\
 \hline
 34 & 64 & 10101001110110111010000000110010101\\
    & 32 & 10101001110111101010010000110010101\\
    & 16 & 10101001111100111010000110110010101\\
    & 8  & 10101001111101101010010110110010101\\
    & 8  & 10101010101011011010001010101010101\\
 \hline
 35 & 8  & 101010101010100111001001010101010101\\
    & 16 & 101010101011100111001000110101010101\\
 \hline
 38 & 4  & 101010101010011010101010100010101010101\\
    & 32 & 101111110010111110100011100010011011101\\
 \hline
 41 & 2048  & 111111011110101001111000100100001110100000\\
    & 4096  & 111111011110111001111000100010001110100000\\
    & 8192  & 111111111111001010111001000100100010100000\\
    & 16384 & 111111111111011010111001000010100010100000\\
 \hline
 44 & 4   & 101010101010100110101010101010001010101010101\\
    & 128 & 101011111000111111110110000011011000110110101\\
 \hline
 47 & 1048576 & 111111111111110100111111000001000000100000000000\\
 \hline
 48 & 4096 & 1011001111011011010111010101000000000001000000101\\
 \hline
 50 & 4 & 101010101010101001101010101010101000101010101010101\\
 \hline \hline
\end{tabular}
\label{table: solutions}
\end{center}


\begin{thebibliography}{99}

\bibitem{AI14}
D. Andrica, E.J. Ionascu,
{\em Some Unexpected Connections Between Analysis and Combinatorics}, In
Mathematics Without Boundaries, Surveys in Pure Mathematics, pages 1--19, Springer-Verlag, 2014.

\bibitem{BW15}
L. Baker, S. Wagner,
{\em Erd\"os-Sur\'anyi sequences and trigonometric integrals}, arXiv:1506.04555, 2015.

\bibitem{Bl66}
N.M. Blachman, Noise and its effect on communication, New York, London: McGraw-Hill, 1966.

\bibitem{B82}
P.L. Buzytsky, {\em An effective formula for the number of solutions of linear Boolean equations}, SIAM J. Alg. Disc. Meth. 3:2 (1982), 182--186.

\bibitem{CF89}
M. Chaimovich, G. Freiman, Z. Galil,
{\em Solving dense subset-sum problems by using analytical number theory},
J. Complexity 5 (1989), 271--282.

\bibitem{Co92}
M.J. Coster, A. Joux, B.A. LaMacchia, A.M. Odlyzko, C.-P. Schnorr, J. Stern, {\em Improved low-density subset sum algorithms},  Comput. Complexity 2 (1992), 111--128.

\bibitem{CL05}
T.W. Cusick, Y. Li, {\em $k$-th order symmetric SAC boolean functions and
bisecting binomial coefficients}, Discrete Appl. Math. 149 (2005), 73--86.


\bibitem{Dr88}
M.O. Drimbe, {\em  Generalization of representation theorem of Erd\"os and Sur\'anyi}, Comment. Math. Prace
Mat. 27:2 (1988), 233--235.

\bibitem{FL05}
J.D. Farmer, S.C. Leth,
{\em An Asymptotic Formula for Powers of Binomial Coefficients}, Math. Gazette 89:516 (2005), 385--391.

\bibitem{Fo88}
R. Forr\'e, {\em The strict avalanche criterion: spectral properties of Boolean functions and an extended definition},
Adv. in Cryptology -- Crypto. '88, pp. 450--468.

\bibitem{Fr80}
G.A. Freiman, {\em An analytical method of analysis of linear Boolean equations}, Ann.
N.Y. Acad. Sci. 337 (1980), 97--102.

\bibitem{Fr96}
G.A. Freiman,
{\em On Solvability of a System of Two Boolean Linear Equations}, { Number Theory: New York Seminar} 1991--1995, 135--150.

\bibitem{GJ79}
 M.R. Garey, D.S. Johnson,   Computer and Intractability: A Guide to
the Theory of NP-Completeness, W.H. Freeman and CO., San Francisco, 1979.


\bibitem{GR97}
J. von zur Gathen, J. Roche, {\em Polynomials with two values}, Combinatorica 17 (1997),  345--362.

\bibitem{Goe87}
P. Goetgheluck, {\em Computing binomial coefficients}, American Math. Monthly 94:4 (1987),  360--365.

\bibitem{GHS93}
   K. Gopalakrishnan, D.G. Hoffman, D.R. Stinson, {\em A note on a conjecture concerning symmetric resilient
functions}, Inform. Proc. Lett. 47 (1993), 139--143.

\bibitem{GKP94}
R.L. Graham, D.E. Knuth, O. Patashnik, Concrete Mathematics: A Foundation for Computer Science, 2nd edition, 1994.

\bibitem{Gr97}
A. Granville, {\em Arithmetic properties of binomial coefficients. I. Binomial coefficients
modulo prime powers}, in Organic mathematics (Burnaby, BC, 1995),
253--276, CMS Conf. Proc., 20, Amer. Math. Soc., Providence, RI, 1997.

\bibitem{Jef91}
N. Jefferies, {\em Sporadic partitions of binomial coefficients},
Elec. Lett. 27:15 (1991), 134--136.

\bibitem{LO85}
J.C. Lagarias, A.M. Odlyzko, {\em Solving Low-Lensity Subset Sum Problems},
J. Assoc. Comp. Mach. 32:1 (1985), 229--246.


\bibitem{Len03}
T. Lengyel,
{\em On the order of lacunary sums of binomial coefficients},
Integers: Electronic J. Combin. Number Theory 3 (2003), \#A03.

\bibitem{Ma00}
K. Matthews, {\em The Diophantine Equation $x^2 - Dy^2 = N$, $D > 0$}, Expositiones Math.
18 (2000), 323--331.

\bibitem{MH78}
 R. Merkle, M. Hellman,  {\em Hiding Information and Signatures in Trapdoor
Knapsacks}, IEEE Trans.  Inf. Theory 24:5 (1978), 525--530.


 \bibitem{Mi90}
 C. Mitchell, {\em Enumerating Boolean functions of cryptographic significance}, J. Cryptology 2 (1990), 155--170.

\bibitem{PZ72}
G. Polya, G. Szeg\"o,
Problems and Theorems in Analysis I: Series, Integral Calculus, Theory of Functions, 1972.


\bibitem{St01}
P. St\u anic\u a, {\em Good Lower and Upper Bounds on Binomial Coefficients}, J. Inequalities in Pure and Applied Math., Vol.2, Issue 3 (2001), Art. 30.

\bibitem{Rog} J. Rogawski, {Calculus}, W. H. Freeman and Company, 2008.


\bibitem{Tek07}
A. Tekcan,
{\em The Pell Equation $x^2 - Dy^2 = \pm 4$},
Appl. Math. Sciences  1:8  (2007),   363--369.

\bibitem{WT85}
A.F. Webster, S.E. Tavares, {\em On the design of $S$-boxes}, Advances in Cryptology -- Crypto. 1985, pp. 523--534.

\end{thebibliography}
\end{document}